\newif\ifarxiv\arxivtrue
\newif\ifsagt\sagtfalse
\renewcommand\labelenumi{(\roman{enumi})}
\renewcommand\theenumi\labelenumi
\crefname{lemma}{Lemma}{Lemmas}
\crefname{theorem}{Theorem}{Theorems}
\newcommand{\R}{\mathbb{R}}
\newcommand{\N}{\mathbb{N}}
\newcommand*\diff{\mathop{}\!\mathrm{d}}
\DeclarePairedDelimiterX{\scalar}[2]{\langle}{\rangle}{#1, #2}
\renewcommand{\l}{\ell}
\newcommand{\Time}{[0, \infty)}
\newcommand{\Flow}{\R_{\geq 0}}
\newcommand{\capa}{\nu}
\renewcommand{\epsilon}{\varepsilon}
\newcommand{\source}{s}
\newcommand{\sink}{t}
\newcommand{\X}{X} 
\newcommand{\inrate}{r} 
\newcommand{\T}{T} 
\begin{document}
\title{Dynamic Equilibria in Time-Varying Networks\thanks{Funded by the Deutsche Forschungsgemeinschaft (DFG, German Research Foundation) under Germany's Excellence Strategy – The Berlin Mathematics Research Center MATH+ (EXC-2046/1, project ID: 390685689).}}
%
%
\author{Hoang Minh Pham \and
Leon Sering
}
\authorrunning{H. M. Pham and L. Sering}
%
\institute{Technische Universität Berlin, Germany\\
\email{hoang.m.pham@campus.tu-berlin.de} \qquad \email{sering@math.tu-berlin.de}
}
\maketitle              
\begin{abstract}
Predicting selfish behavior in public environments by considering Nash equilibria is a central concept of game theory.
For the dynamic traffic assignment problem modeled by a flow over time game, in which every particle tries to reach its
destination as fast as possible, the dynamic equilibria are called Nash flows over time. So far, this model has only
been considered for networks in which each arc is equipped with a constant capacity, limiting the outflow rate, and with
a transit time, determining the time it takes for a particle to traverse the arc. However, real-world traffic networks
can be affected by temporal changes, for example, caused by construction works or special speed zones during some time
period. To model these traffic scenarios appropriately, we extend the flow over time model by time-dependent capacities
and time-dependent transit times. Our first main result is the characterization of the structure of Nash flows over
time. Similar to the static-network model, the strategies of the particles in dynamic equilibria can be characterized by
specific static flows, called thin flows with resetting. The second main result is the existence of Nash flows over
time, which we show in a constructive manner by extending a flow over time step by step by these thin flows.
\keywords{Nash flows over time \and dynamic equilibria \and deterministic queuing \and time-varying networks \and
dynamic traffic assignment.}
\ifsagt 
\\[1em]
\textbf{Related Version:} A full version of this paper including all proofs is available at
\url{https://arxiv.org/abs/1807.05862}.
\fi
\end{abstract}
\section{Introduction} \label{sec:introduction} 
In the last decade the technological advances in the mobility and communication sector have grown rapidly enabling
access to real-time traffic data and autonomous driving vehicles in the foreseeable future. One of the major advantages
of self-driving and communicating vehicles is the ability to directly use information about the traffic network
including the route-choice of other road users. This holistic view of the network can be used to decrease travel times
and distribute the traffic volume more evenly over the network. As users will still expect to travel along a fastest
route it is important to incorporate game theoretical aspects when analyzing the dynamic traffic assignment. The
results can then be used by network designers to identify bottlenecks beforehand, forecast air pollution in dense urban
areas and give feedback on network structures.
In order to obtain a better understanding of the complicated interplay between traffic users it is important to develop
strong mathematical models which represent as many real-world traffic features as possible. Even though the more
realistic models consider a time-component, the network properties are considered to stay constant in most cases.
Surely, this is a serious drawback as real road networks often have properties that vary over time. For example, the
speed limit in school zones is often reduced during school hours, roads might be completely or partially blocked due to
construction work and the direction of reversible lanes can be switched, causing a change in the capacity in both
directions. A more exotic, but nonetheless important setting are evacuation scenarios. Consider an inhabited region of
low altitude with a high risk of flooding. As soon as there is a flood warning everyone needs to be evacuated to some
high-altitude-shelter. But, due to the nature of rising water levels, roads with low altitude will be impassable much
sooner than roads of higher altitude. In order to plan an optimal evacuation or simulate a chaotic equilibrium scenario
it is essential to use a model with time-varying properties. 
This research work is dedicated to providing a better understanding of the impact of dynamic road properties on the
traffic dynamics in the Nash flow over time model. We will transfer all essential properties of Nash flows over time in
static networks to networks with time-varying properties.

\subsection{Related Work} The fundamental concept for the model considered in this paper are \emph{flows over time} or
\emph{dynamic flows}, which were introduced back in 1956 by Ford and Fulkerson \cite{ford1958constructing,ford1962flows}
in the context of optimization problems. The key idea is to add a time-component to classical network flows, which means
that the flow particles need time to travel through the network. In 1959 Gale \cite{gale1959transient} showed the
existence of so called \emph{earliest arrival flows}, which solve several optimization problems at once, as they
maximize the amount of flow reaching the sink at all points in time simultaneously. Further work on these optimal flows
is due to Wilkinson~\cite{wilkinson1971algorithm}, Fleischer and Tardos \cite{fleischer1998efficient}, Minieka
\cite{minieka1973maximal} and many others. For formal definitions and a good overview of optimization problems in flow
over time settings we refer to the survey of Skutella \cite{skutella2009introduction}.

In order to use flows over time for traffic modeling it is important to consider game theoretic aspects. Some pioneer
work goes back to Vickrey \cite{vickrey1969congestion} and Yagar~\cite{yagar1971dynamic}. In the context of classical
(static) network flows, equilibria were introduced by Wardrop \cite{wardrop1952correspondence} in 1952. In 2009 Koch and
Skutella \cite{koch2009nash} (see also \cite{koch2011nash} and Koch's PhD thesis \cite{koch2012phd}) started a fruitful
research line by introducing dynamic equilibria, also called \emph{Nash flows over time}, which will be the central
concept in this paper. In a Nash flow over time every particle chooses a quickest path from the origin to the
destination, anticipating the route choice of all other flow particles. Cominetti et al.\ showed the existence of Nash
flows over time \cite{cominetti2011existence,cominetti2015dynamic} and studied the long term behavior
\cite{cominetti2017long}. Macko et al.~\cite{macko2013braess} studied the Braess paradox in this model and Bhaskar et
al.~\cite{bhaskar2015stackelberg} and Correa et al.~\cite{correa2019price} bounded the price of anarchy under certain
conditions. In 2018 Sering and Skutella \cite{sering2018multiterminal} transferred Nash flows over time to a model with
multiple sources and multiple sinks and in the following year Sering and Vargas Koch \cite{sering2019nash} considered
Nash flows over time in a model with spillback.

A different equilibrium concept in the same model was considered by Graf et al.~\cite{graf2020ide} by introducing
instantaneous dynamic equilibria. In these flows over time the particles do not anticipate the further evolution of the
flow, but instead reevaluate their route choice at every node and continue their travel on a current quickest path. In
addition to that, there is an active research line on packet routing games. Here, the traffic agents are modeled by
atomic packets (vehicles) of a specific size. This is often combined with discrete time steps. Some of the recent work
on this topic is due to Cao et al.~\cite{cao2017network}, Harks et al.~\cite{harks2018competitive}, Peis et
al.~\cite{peis2018oligopolistic} and Scarsini et al.~\cite{scarsini2018dynamic}.

\subsection{Overview and Contribution} In the \emph{base model}, which was considered by Koch and Skutella
\cite{koch2011nash} and by the follow up research
\cite{bhaskar2015stackelberg,cominetti2011existence,cominetti2015dynamic,cominetti2017long,correa2019price,macko2013braess,sering2018multiterminal},
the network is constant and each arc has a constant capacity and constant transit time. In real-world traffic, however,
temporary changes of the infrastructure are omnipresent. In order to represent this, we extend the base model to
networks with time-varying capacities (including the network inflow rate) and time-varying transit times.

We start in \Cref{sec:flow_dynamics} by defining the flow dynamics of the deterministic queuing model with time-varying
arc properties and proving some first auxiliary results. In particular, we describe how to turn time-dependent speed
limits into time-dependent transit times. In \Cref{sec:nash_flows} we introduce some essential properties, such as the
earliest arrival times, which enable us to define Nash flows over time. As in the base model, it is still possible to
characterize such a dynamic equilibrium by the underlying static flow. Taking the derivatives of these parametrized
static flows provides thin flows with resetting, which are defined in \Cref{sec:thin_flows}. We show that the central
results of the base model transfer to time-varying networks, and in particular, that the derivatives of every Nash flow
over time form a thin flow with resetting. In \Cref{sec:construction} we show the reverse of this statement: Nash flows
over time can be constructed by a sequence of thin flows with resetting, which, in the end, proves the existence of
dynamic equilibria. We close this section with a detailed example. Finally, in \Cref{sec:conclusion} we present a
conclusion and give a brief outlook on further research directions.

\ifsagt
Due to space restrictions we omit most technical proofs. They can be found in the appendix of the full version available at
\url{https://arxiv.org/abs/1807.05862}.
\fi

\section{Flow Dynamics} \label{sec:flow_dynamics}

We consider a directed graph $G = (V, E)$ with a source~$\source$ and a sink~$\sink$ such that each node is reachable by
$\source$. In contrast to the Koch-Skutella model, which we will call \emph{base model} from now on, this time each arc
$e$ is equipped with a time-dependent capacity $\capa_e \colon \Time \to (0, \infty)$ and a time-dependent speed
limit~$\lambda_e \colon \Time \to (0, \infty)$, which is inversely proportional to the transit time. We consider a
time-dependent network inflow rate $\inrate\colon \Time \to \Time$ denoting the flow rate at which particles enter the
network at $\source$. We assume that the amount of flow an arc can support is unbounded and that the network inflow is
unbounded as well, i.e., for all $e \in E$ we require that
\[\int_0^\theta \capa_e(\xi) \diff \xi \to \infty, \quad \int_0^\theta \lambda_e(\xi) \diff \xi \to \infty \quad 
\text{ and } \quad \int_0^\theta \inrate(\xi) \diff \xi \to \infty \quad \text{ for } \theta \to \infty.\]
Later on, in order to be able to construct Nash flows over time, we will additionally assume that all these functions
are right-constant, i.e., for every $\theta \in \Time$ there exists an $\epsilon > 0$ such that the function is constant
on $[\theta, \theta + \epsilon)$.

\subsubsection{Speed limits.} Let us focus on the transit times first. We have to be careful how to model the
transit time changes, since we do not want to lose the following two properties of the base model:
\begin{enumerate}
\item We want to have the first-in-first-out (FIFO) property for arcs, which leads to FIFO property of the network for
Nash flows over time \cite[Theorem 1]{koch2011nash}. 
\item Particles should never have the incentive to wait on a node.
\end{enumerate}
In other words, we cannot simply allow piecewise-constant transit times, since this could lead to the following case: If
the transit time of an arc is high at the beginning and gets reduced to a lower value at some later point in time,
then particles might overtake other particles on that arc. Thus, particles might arrive earlier at the sink if they wait
right in front of the arc until its transit time drops.
Hence, we let the speed limit change over time instead. In order to keep the number of parameters of the network as
small as possible, we assume that the lengths of all arcs equal $1$ and, instead of a transit time, we equip every arc~$e
\in E$ with a time-dependent speed limit $\lambda_e \colon \Time \to (0, \infty)$. Thus, a particle might traverse the
first part of an arc at a different speed than the remaining distance if the maximal speed changes midway; see
\Cref{fig:speed_limits_on_road:timedep}.
\begin{figure}[t]
\centering \includegraphics[page=1]{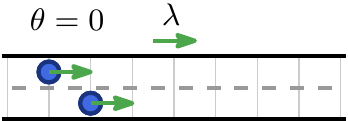} \quad
\includegraphics[page=2]{speed_limits_on_road_timedep} \quad \includegraphics[page=3]{speed_limits_on_road_timedep}
\caption{Consider a road segment with time-dependent speed limit that is low in the time interval $[0, 1)$ and large
afterwards. All vehicles, independent of their position, first traverse the link slowly and immediately speed up to the
new speed limit at time~$1$.} \label{fig:speed_limits_on_road:timedep}
\end{figure}
\subsubsection{Transit times.} Note that we assume the point queue of an arc to always right in front of
the exit. Hence, a particle entering arc $e$ at time $\theta$ immediately traverses the arc of length $1$ with a
time-dependent speed of $\lambda_e$. The \emph{transit time}~${\tau \colon \Time \to \Time} $ is therefore given by
\[\tau_e(\theta) \coloneqq \min \Set{ \tau \geq 0 | \int_{\theta}^{\theta+\tau} \lambda_e(\xi) \diff \xi = 1}.\]
Since we required $\int_0^\theta \lambda_e(\xi) \diff \xi$ to be unbounded for $\theta \to \infty$, we always have a
finite transit time. For an illustrative example see \Cref{fig:from_speed_limits_to_transit_times:timedep}. 
\pagebreak[2]

\begin{figure}[t]
\begin{minipage}{0.48\textwidth}
\centering \includegraphics{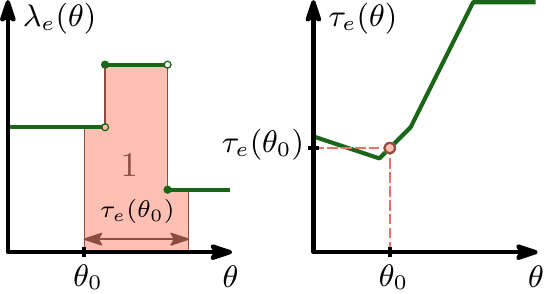} \vspace{-0.5\baselineskip}
\caption{From speed limits (\emph{left side}) to transit times (\emph{right side}). The transit time~$\tau_e(\theta)$
denotes the time a particle needs to traverse the arc when entering at time~$\theta$. We normalize the speed limits by
assuming that all arcs have length~$1$, and hence, the transit time $\tau_e(\theta)$ equals the length of an interval
starting at $\theta$ such that the area under the speed limit graph within this interval is $1$.}
\label{fig:from_speed_limits_to_transit_times:timedep}
\end{minipage} \hfill
\begin{minipage}{0.48\textwidth}
\centering \vspace{\baselineskip} \includegraphics{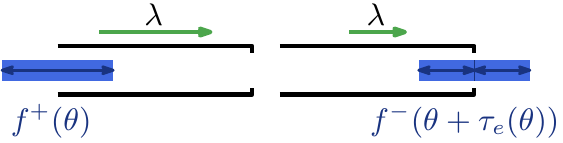}\vspace{.5\baselineskip} \caption{An illustration
of how the flow rate changes depending on the speed limits. \emph{On the left:} As the speed limit $\lambda$ is high,
the flow volume entering the arc per time unit is represented by the area of the long rectangle. \emph{On the right:}
The speed limit is halved, and therefore, the same amount of flow needs twice as much time to leave the arc (or enter
the queue if there is one). Hence, if there is no queue, the outflow rate at time $\tau + \tau_e(\theta)$ is only half
the size of the inflow rate at time $\theta$.} \label{fig:speed_ratio:timedep}
\end{minipage}
\end{figure}

The following lemma shows some basic properties of the transit times.
\begin{restatable}{lemma}{transittimestimedep} \label{lem:transit_times:timedep}
For all $e \in E$ and almost all $\theta \in \Time$ we have:
\begin{enumerate}
\item The function $\theta \mapsto \theta + \tau_e(\theta)$ is strictly increasing. \label{it:queueing_time_monotone:timedep}
\item The function $\tau_e$ is continuous and almost everywhere differentiable. \label{it:tau_is_diffbar:timedep}
\item For almost all $\theta \in \Time$ we have
$1 + \tau'_e(\theta) = \frac{\lambda_e(\theta)}{\lambda_e(\theta + \tau_e(\theta))}$. \label{it:derivative_of_tau:timedep}
\end{enumerate}
\end{restatable}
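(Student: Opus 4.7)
The approach is to reformulate the defining condition of $\tau_e$ in terms of the antiderivative $\Lambda_e(\theta) := \int_0^\theta \lambda_e(\xi) \diff\xi$ and then deduce all three claims from standard properties of $\Lambda_e$. Since $\lambda_e$ takes only positive values and $\int_0^\theta \lambda_e(\xi) \diff\xi \to \infty$ as $\theta \to \infty$, the function $\Lambda_e$ is continuous, strictly increasing, absolutely continuous, and a bijection of $\Time$ onto itself. The defining equation of $\tau_e$ rewrites as $\Lambda_e(H(\theta)) = \Lambda_e(\theta) + 1$ where $H(\theta) := \theta + \tau_e(\theta)$, or equivalently $H = \Lambda_e^{-1} \circ (\Lambda_e + 1)$.

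From this reformulation, claim (i) is immediate: as a composition of strictly increasing maps, $H$ is strictly increasing. For claim (ii), continuity of $H$, and hence of $\tau_e = H - \mathrm{id}$, follows from continuity of $\Lambda_e$ and $\Lambda_e^{-1}$; differentiability of $H$ almost everywhere is a direct consequence of Lebesgue's differentiation theorem for monotone functions, and passes immediately to $\tau_e$.

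For claim (iii), I would differentiate the identity $\Lambda_e(H(\theta)) - \Lambda_e(\theta) = 1$ using the chain rule. At each $\theta$ where $H$ is differentiable, $\Lambda_e$ is differentiable at $\theta$ with derivative $\lambda_e(\theta)$, and $\Lambda_e$ is differentiable at $H(\theta)$ with derivative $\lambda_e(H(\theta))$, one obtains $\lambda_e(H(\theta)) \cdot H'(\theta) = \lambda_e(\theta)$. Since $H'(\theta) = 1 + \tau_e'(\theta)$, this rearranges to the stated formula.

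The main technical obstacle is ensuring that these three differentiability conditions hold simultaneously on a set of full measure. The first holds almost everywhere by (ii), and the second holds for almost all $\theta$ by Lebesgue's differentiation theorem applied to $\lambda_e$. The third requires showing $H^{-1}(N)$ to be null, where $N$ is the null set outside of which $\Lambda_e'$ coincides with $\lambda_e$. Since $\Lambda_e$ is absolutely continuous, $\Lambda_e(N)$ is null by the Luzin $(N)$ property, and the identity $H^{-1}(N) = \Lambda_e^{-1}(\Lambda_e(N) - 1)$ reduces the claim to showing that $\Lambda_e^{-1}$ also has the Luzin $(N)$ property; this in turn holds because $\Lambda_e$ is absolutely continuous and strictly increasing with $\Lambda_e' = \lambda_e > 0$ almost everywhere, a classical criterion for absolute continuity of the inverse. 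This completes the verification and yields the formula on a set of full measure.
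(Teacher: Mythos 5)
Your proof is correct and follows the same basic plan as the paper's: differentiate the defining integral identity and apply Lebesgue's theorem on the differentiability of monotone functions. The repackaging via $\Lambda_e(\theta)=\int_0^\theta\lambda_e(\xi)\diff\xi$ and $H=\Lambda_e^{-1}\circ(\Lambda_e+1)$ makes parts (i) and (ii) essentially immediate, a small streamlining over the paper's direct computation for (i). Where you go genuinely beyond the paper is in part (iii): the paper simply invokes ``Lebesgue's differentiation theorem together with the chain rule'' and does not verify that the three relevant differentiability conditions hold simultaneously on a set of full measure. You correctly isolate the real issue, namely that $H^{-1}(N)$ must be shown to be null, where $N$ is the exceptional set for $\Lambda_e'=\lambda_e$, and you resolve it by rewriting $H^{-1}(N)=\Lambda_e^{-1}(\Lambda_e(N)-1)$, using Luzin's property $(N)$ of the absolutely continuous $\Lambda_e$ on the inside, and using the classical fact that $\Lambda_e^{-1}$ is itself absolutely continuous (hence has property $(N)$) because $\Lambda_e'=\lambda_e>0$ a.e.\ for the outside. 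That added step is both correct and a genuine improvement in rigor: without it, the chain-rule application in (iii) is not fully justified, since a monotone map can in general carry a positive-measure set into a null set.
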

These statement follow by simple computation and some basic Lebesgue integral theorems. 
\ifarxiv 
The proof can be found in the appendix on page \pageref{proof:transit_times:timedep}.
\fi

\subsubsection{Speed ratios.} The ratio in \Cref{lem:transit_times:timedep}~\ref{it:derivative_of_tau:timedep} will be
important to measure the outflow of an arc depending on the inflow. We call $\gamma_e \colon \Time \to [0, \infty)$ the
\emph{speed ratio} of $e$ and it is defined by
$\gamma_e(\theta) \coloneqq \frac{\lambda_e(\theta)}{\lambda_e(\theta + \tau_e(\theta))} = 1 + \tau'_e(\theta)$.
Figuratively speaking, this ratio describes how much the flow rate changes under different speed limits. If, for
example, $\gamma_e(\theta) = 2$, as depicted in \Cref{fig:speed_ratio:timedep}, this means that the speed limit was
twice as high when the particle entered the arc as it is at the moment the particle enters the queue. In this case the
flow rate is halved on its way, since the same amount of flow that entered within one time unit, needs two time units to
leave it. With the same intuition the flow rate is increased whenever $\gamma_e(\theta) < 1$. Note that in figures of
other publications on flows over time the flow rate is often pictured by the width of the flow. But for time-varying
networks this is not accurate anymore as the transit speed can vary. Hence, in this paper the flow rates are given by
the width of the flow multiplied by the current speed limit.

A \emph{flow over time} is specified by a family of locally integrable and bounded functions $f = (f^+_e, f^-_e)_{e\in
E}$ denoting the in- and outflow rates. The \emph{cumulative in- and outflows} are given by
\[F_e^+(\theta) \coloneqq \int_0^\theta f_e^+(\xi) \diff \xi \qquad \text{ and } \qquad
F_e^-(\theta) \coloneqq \int_0^\theta f_e^-(\xi) \diff \xi.\]
A flow over time \emph{conserves flow on all arcs $e$}:
\begin{equation}\label{eq:conservation_on_arcs:timedep}
F_e^-(\theta + \tau_e(\theta)) \leq F_e^+(\theta) \qquad \text{for all } \theta \in [0, \infty],
\end{equation}
and \emph{conserves flow at every node~$v \in V\backslash \set{\sink}$} for almost all $\theta \in \Time$:
\begin{equation} \label{eq:flow_conservation:timedep}
\sum_{e\in \delta^+_v} f_e^+(\theta) - \sum_{e \in \delta^-_v} f_e^-(\theta) = \begin{cases}
0 & \text{ if } v \in V\setminus \set{\sink}, \\
\inrate(\theta) & \text{ if } v = \source.
\end{cases}\end{equation}

A particle entering an arc $e$ at time $\theta$ reaches the head of the arc at time $\theta +
\tau_e(\theta)$ where it lines up at the point queue. Thereby, the \emph{queue size} $z_e\colon \Time \to \Time$ at
time $\theta + \tau_e(\theta)$ is defined by
$z_e(\theta + \tau_e(\theta)) \coloneqq F_e^+(\theta) - F_e^-(\theta + \tau_e(\theta))$.

We call a flow over time in a time-varying network \emph{feasible} if we have for almost
all $\theta \in \Time$ that
\begin{equation}\label{eq:outflow:timedep}
f_e^-(\theta + \tau_e(\theta)) = \begin{cases}
\capa_e(\theta + \tau_e(\theta)) & \text{ if } z_e(\theta + \tau_e(\theta)) > 0, \\
\min\Set{\frac{f_e^+(\theta)}{\gamma_e(\theta)}, \capa_e(\theta + \tau_e(\theta))} & \text{ else, }
\end{cases} \end{equation}
and $f_e^-(\theta) = 0$ for almost all $\theta < \tau_e(0)$. 

Note that the outflow rate depends on the speed ratio $\gamma_e(\theta)$ if the queue is empty (see
\Cref{fig:speed_ratio:timedep}). Otherwise, the particles enter the queue, and therefore, the outflow rate equals the
capacity independent of the speed ratio. Furthermore, we observe that every arc with a positive queue always has a
positive outflow, since the capacities are required to be strictly positive. And finally, \eqref{eq:outflow:timedep}
implies \eqref{eq:conservation_on_arcs:timedep}, which can easily be seen by
considering the derivatives of the cumulative flows whenever we have an empty queue, i.e., $F_e^-(\theta +
\tau_e(\theta)) = F_e^+(\theta)$. By \eqref{eq:outflow:timedep} we have that ${f_e^-(\theta + \tau_e(\theta)) \cdot (1 +
\tau'_e(\theta)) \leq f_e^+(\theta)}$. Hence, \eqref{eq:flow_conservation:timedep} and
\eqref{eq:outflow:timedep} are sufficient for a family of functions $f = (f_e^+, f_e^-)_{e \in E}$ to be a feasible flow
over time.

 \begin{figure}[t] 
 \centering \includegraphics{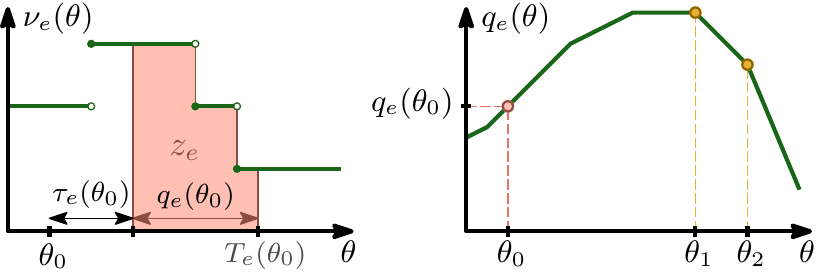} \caption{Waiting times for time-dependent capacities. The waiting
 time of a particle $\theta_0$ (\emph{right side}) is given by the length of the interval starting at $\theta_0
 +\tau_e(\theta_0)$ such that the area underneath the capacity graph equals the queue size at time~$\theta_0
 +\tau_e(\theta_0)$ (\emph{left~side}). The right boundary of the interval equals the exit time $T_e(\theta_0)$. The
 waiting time does not only depend on the capacity but also on the inflow rate and the transit times. For example, if
 the capacity and the speed limit are constant but the inflow rate is $0$, the waiting time will decrease with a slope
 of $1$ (\emph{right side} within $[\theta_1, \theta_2]$).} \label{fig:waiting_times:timedep}
 \end{figure}

The \emph{waiting time} $q_e \colon \Time \to \Time$ of a particle that enters the arc at
time $\theta$ is defined by
\[q_e(\theta) \coloneqq \min \Set{q \geq 0 | \int_{\theta + \tau_e(\theta)}^{\theta + \tau_e(\theta) + q} 
\capa_e(\xi) \diff \xi = z_e(\theta + \tau_e(\theta))}.\]
As we required $\int_0^{\theta} \capa_e(\xi) \diff \xi$ to be unbounded for $\theta \to \infty$ the set on the right
side is never empty. Hence, $q_e(\theta)$ is well-defined and has a finite value. In addition, $q_e$ is continuous since
$\capa_e$ is always strictly positive. 
%
The \emph{exit time} $T_e \colon \Time \to \Time$ denotes the time at which the particles
that have entered the arc at time $\theta$ finally leave the queue. Hence, we define
$T_e(\theta) \coloneqq \theta + \tau_e(\theta) + q_e(\theta)$.
In \Cref{fig:waiting_times:timedep} we display an illustrative example for the
definition of waiting and exit times.

With these definitions we can show the following lemma.  
\begin{restatable}{lemma}{technicalpropertiestimedep}\label{lem:technical_properties:timedep}
For a feasible flow over time $f$ it holds for all $e \in E$, $v \in V$ and ${\theta \in \Time}$ that:
\begin{enumerate}
\item $q_e(\theta) > 0 \;\; \Leftrightarrow \;\; z_e(\theta+\tau_e(\theta)) > 0$. \label{it:q_equiv_z:timedep}
\item $z_e(\theta +\tau_e(\theta) + \xi) > 0$ for all $\xi \in [0, q_e(\theta))$.
\label{it:positive_queue_while_emptying:timedep}
\item $F^+_e(\theta)=F^-_e(T_e(\theta))$. \label{it:in_equals_out_at_exit_time:timedep}
\item For $\theta_1 < \theta_2$ with $F_e^+(\theta_2) - F_e^+(\theta_1) = 0$ and $z_e(\theta_2 + \tau_e(\theta_2))>0$  
we have $T_e(\theta_1)=T_e(\theta_2)$. \label{it:equal_exit_times:timedep}
\item The functions $T_e$ are monotonically increasing. \label{it:T_monoton:timedep}
\item The functions $q_e$ and $T_e$ are continuous and almost everywhere differentiable.\label{it:q_is_diffbar:timedep}
\item For almost all $\theta \in \Time$ we have
\[T'_e(\theta) = \begin{cases}
\frac{f_e^+(\theta)}{\capa_e(T_e(\theta))}& \text{ if } q_e(\theta) > 0,\\
\max \Set{\gamma_e(\theta), \frac{f_e^+(\theta)}{\capa_e(T_e(\theta))}} & \text{ else.}
\end{cases}\] \label{it:derivative_of_T:timedep}
\end{enumerate}  
\end{restatable}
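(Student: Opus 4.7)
The plan is to prove the seven properties roughly in order, as each later part leans on the earlier ones. The overall strategy is to think of $T_e$ as the ``FIFO exit map'': a particle entering at $\theta$ spends $\tau_e(\theta)$ traversing the arc, then $q_e(\theta)$ waiting in the queue. Most parts reduce either to the definitions of $q_e, z_e, T_e$ or to differentiating the cumulative identity \ref{it:in_equals_out_at_exit_time:timedep}.

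Parts \ref{it:q_equiv_z:timedep} and \ref{it:positive_queue_while_emptying:timedep} are essentially definitional. For \ref{it:q_equiv_z:timedep}, strict positivity of $\capa_e$ makes the integral in the definition of $q_e(\theta)$ a strictly increasing function of the upper bound starting from $0$, so $q_e(\theta)>0$ iff $z_e(\theta+\tau_e(\theta))>0$. For \ref{it:positive_queue_while_emptying:timedep}, I would argue by contradiction: if the queue vanished at some $\xi_0 \in [0,q_e(\theta))$, then feasibility \eqref{eq:outflow:timedep} together with flow conservation on the arc would force $\int_{\theta+\tau_e(\theta)}^{\theta+\tau_e(\theta)+\xi_0}\capa_e \,d\xi$ already to reach $z_e(\theta+\tau_e(\theta))$, contradicting the minimality in the definition of $q_e(\theta)$.

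For \ref{it:in_equals_out_at_exit_time:timedep}, I would compute $F_e^-(T_e(\theta)) - F_e^-(\theta+\tau_e(\theta))$ using feasibility: while the queue is nonempty on $(\theta+\tau_e(\theta), T_e(\theta))$ by \ref{it:positive_queue_while_emptying:timedep}, the outflow rate equals $\capa_e$, so this difference equals $\int_{\theta+\tau_e(\theta)}^{T_e(\theta)}\capa_e\,d\xi = z_e(\theta+\tau_e(\theta)) = F_e^+(\theta)-F_e^-(\theta+\tau_e(\theta))$. Rearranging gives \ref{it:in_equals_out_at_exit_time:timedep}. Part \ref{it:equal_exit_times:timedep} follows: by \ref{it:in_equals_out_at_exit_time:timedep}, $F_e^-(T_e(\theta_1)) = F_e^-(T_e(\theta_2))$, and since the queue is positive on the interval in between (via \ref{it:positive_queue_while_emptying:timedep} applied at $\theta_2$), capacity positivity forces $T_e(\theta_1)=T_e(\theta_2)$. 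Monotonicity \ref{it:T_monoton:timedep} then comes from Lemma \ref{lem:transit_times:timedep}\ref{it:queueing_time_monotone:timedep} combined with the same queue argument.

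The analytic regularity in \ref{it:q_is_diffbar:timedep} uses that $F_e^+, F_e^-$ are absolutely continuous and $\int_0^{\cdot}\capa_e$ is absolutely continuous and strictly increasing, so its inverse is continuous; composing with $z_e \circ (\theta+\tau_e(\theta))$ yields continuity of $q_e$, and monotonicity of $T_e$ from \ref{it:T_monoton:timedep} with Lebesgue's theorem gives almost everywhere differentiability. The main obstacle I expect is \ref{it:derivative_of_T:timedep}: differentiating \ref{it:in_equals_out_at_exit_time:timedep} yields $f_e^+(\theta) = f_e^-(T_e(\theta))\cdot T_e'(\theta)$ almost everywhere, and the cases of \eqref{eq:outflow:timedep} must be matched carefully. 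If $q_e(\theta)>0$, then by \ref{it:positive_queue_while_emptying:timedep} the queue is positive at $T_e(\theta)$, so $f_e^-(T_e(\theta))=\capa_e(T_e(\theta))$, yielding the top branch. If $q_e(\theta)=0$, then $T_e(\theta)=\theta+\tau_e(\theta)$ and \eqref{eq:outflow:timedep} gives $f_e^-(T_e(\theta)) = \min\{f_e^+(\theta)/\gamma_e(\theta),\capa_e(T_e(\theta))\}$; substituting and using $T_e'(\theta)\geq 1+\tau_e'(\theta)=\gamma_e(\theta)$ (since $q_e\geq 0$ is nondecreasing when the queue is freshly forming) shows that $T_e'(\theta)=\max\{\gamma_e(\theta), f_e^+(\theta)/\capa_e(T_e(\theta))\}$. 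The delicate point is handling the measure-zero set where $f_e^+$, $f_e^-$, or $T_e'$ are ill-behaved, and justifying the chain rule for the composition $F_e^-\circ T_e$ with only almost-everywhere differentiability; I would invoke the absolute continuity of $F_e^-$ and the monotonicity of $T_e$ to apply a change-of-variables argument.
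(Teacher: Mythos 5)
Parts \ref{it:q_equiv_z:timedep}--\ref{it:q_is_diffbar:timedep} of your plan follow the paper's route (or a harmless variant of it). Your proof of \ref{it:equal_exit_times:timedep} leans on \ref{it:in_equals_out_at_exit_time:timedep} rather than on the substitution the paper performs inside the definition of $q_e$, which is a perfectly good alternative, though you should spell out the case analysis showing $F_e^-$ must strictly separate $T_e(\theta_1)$ from $T_e(\theta_2)$ (you need positivity of the queue not just on $[\theta_2+\tau_e(\theta_2), T_e(\theta_2))$ but also, when $T_e(\theta_1) > T_e(\theta_2)$, on the relevant piece of $[\theta_1+\tau_e(\theta_1), T_e(\theta_1))$ via \ref{it:positive_queue_while_emptying:timedep} applied at $\theta_1$).

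The real gap is in \ref{it:derivative_of_T:timedep}, and it comes from differentiating the wrong identity. You chain-rule $F_e^+(\theta) = F_e^-(T_e(\theta))$ to get $f_e^+(\theta) = f_e^-(T_e(\theta))\,T_e'(\theta)$, and then try to plug in the outflow condition at the point $T_e(\theta)$. Two things go wrong. First, your claim that \ref{it:positive_queue_while_emptying:timedep} gives a positive queue \emph{at} $T_e(\theta)$ is false: it only gives positivity on the half-open interval $[\theta+\tau_e(\theta), T_e(\theta))$, and the queue may drain exactly at $T_e(\theta)$, so $f_e^-(T_e(\theta))=\capa_e(T_e(\theta))$ does not follow from \eqref{eq:outflow:timedep}. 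Second, in the case $q_e(\theta)=0$ and $f_e^+(\theta)=0$, feasibility gives $f_e^-(T_e(\theta))=0$ and your identity collapses to $0 = 0\cdot T_e'(\theta)$, which says nothing about $T_e'(\theta)$; yet the statement still requires $T_e'(\theta)=\gamma_e(\theta)$. The paper avoids both problems by differentiating the integral identity that actually defines $q_e$, namely
\[
\int_0^{T_e(\theta)} \capa_e(\xi)\diff\xi - \int_0^{\theta+\tau_e(\theta)} \capa_e(\xi)\diff\xi = F_e^+(\theta) - F_e^-(\theta+\tau_e(\theta)),
\]
which yields
\[
\capa_e(T_e(\theta))\,T_e'(\theta) - \capa_e(\theta+\tau_e(\theta))\,\gamma_e(\theta) = f_e^+(\theta) - f_e^-(\theta+\tau_e(\theta))\,\gamma_e(\theta).
\]
This keeps the term $\capa_e(T_e(\theta))\,T_e'(\theta)$ explicit (so $T_e'$ never gets multiplied by a quantity that may vanish), and it evaluates the outflow condition at $\theta+\tau_e(\theta)$ rather than at $T_e(\theta)$ --- at $\theta+\tau_e(\theta)$ you \emph{do} know $z_e>0$ when $q_e(\theta)>0$, by \ref{it:q_equiv_z:timedep}. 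Your heuristic ``$T_e'(\theta)\ge\gamma_e(\theta)$ since $q_e$ is nondecreasing when the queue is freshly forming'' is also not a correct justification: when $q_e(\theta)=0$ and $q_e$ is differentiable at $\theta$, one actually gets $q_e'(\theta)=0$ (it is a global minimum), not merely $q_e'(\theta)\ge0$; the $\max$ in the formula is recovered from the differentiated integral identity, not from a sign argument on $q_e'$.
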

Most of the statements follow immediately from the definitions and some involve minor calculations. For
\ref{it:q_is_diffbar:timedep} we use Lebesgue's differentiation theorem. 
\ifarxiv
As the proof does not give any interesting further insights we moved it to the appendix on page
\pageref{proof:technical_properties:timedep}.
\fi

\section{Nash Flows Over Time}\label{sec:nash_flows} In order to define a dynamic equilibrium we consider the particles
as players in a dynamic game. For this the set of particles is identified by the non-negative reals denoted by $\Flow$.
The flow volume is hereby given by the Lebesgue-measure, which means that $[a, b] \subseteq \Flow$ with $a < b$ contains
a flow volume of $b - a$. The flow particles enter the network according to the ordering of the reals beginning with
particle $0$. It is worth noting that a particle $\phi \in \Flow$ can be split up further so that for example one half
takes a different route than the other half. As characterized by Koch and Skutella, a dynamic equilibrium is a feasible
flow over time, where almost all particles only use current shortest paths from~$\source$ to~$\sink$. Note that we
assume a game with full information. Consequently, all particles know all speed limit and capacity functions in advance
and have the ability to perfectly predict the future evolution of the flow over time. Hence, each particle perfectly
knows all travel times and can choose its route accordingly. We start by defining the earliest arrival times for a
particle $\phi \in \Flow$.

The \emph{earliest arrival time functions} $\l_v \colon \Flow \to \Time$ map each particle $\phi$ to the earliest time
$\l_v(\phi)$ it can possibly reach node $v$. Hence, it is the solution to
\begin{equation} \label{eq:l_v_def:timedep}
\l_v(\phi) = \begin{cases}
\quad \!\min \;\;\Set{ \theta \geq 0 | \int_0^\theta r(\xi) \diff \xi = \phi} & \text{ for } v = \source,\\
\min\limits_{e = uv \in \delta_v^-} T_e(\l_u(\phi)) & \text{ else.}
\end{cases}
\end{equation}

Note that for all $v \in V$ the earliest arrival time function $\l_v$ is non-decreasing, continuous and almost
everywhere differentiable. This holds directly for $\l_{\source}$ and for $v \neq \source$ it follows inductively, since
these properties are preserved by the concatenation $T_e \circ \l_u$ and by the minimum of finitely many functions.

For a particle~$\phi$ we call an arc $e =
uv$ \emph{active} if $\l_v(\phi) = T_e(\l_u(\phi))$. The set of all these arcs are denoted by $E'_\phi$ and these are
exactly the arcs that form the current shortest paths from $s$ to some node $v$. For this reason we call the subgraph
$G'_\phi = (V, E'_\phi)$ the \emph{current shortest paths network} for particle $\phi$. Note that $G'_\phi$ is acyclic
and that every node is reachable by $\source$ within this graph. The arcs where particle $\phi$ experiences a waiting
time when traveling along shortest paths only are called \emph{resetting arcs} denoted by $E^*_\phi \coloneqq \Set{e =
uv \in E | q_e(\l_u(\phi)) > 0}$.

Nash flows over time in time-varying networks are defined in the exact same way as
Cominetti et al.\ defined them in the base model \cite[Definition 1]{cominetti2011existence}.
\begin{definition}[Nash flow over time]\label{def:nash_flows:timedep}
We call a feasible flow over time $f$ a \emph{Nash flow over time} if the following \emph{Nash flow condition}
holds:
\begin{equation}
f^+_e(\theta) > 0 \;\;\Rightarrow\;\; \theta \in \l_u(\Phi_e) \;\;\text{ for all } e = uv \in E \text{ and almost
all } \theta \in \Time,\tag{N}\label{eq:Nash_condition:timedep}
\end{equation}
 where $\Phi_e \coloneqq \set{\phi \in \Flow | e \in E'_\phi}$ is the set of particles for which arc $e$ is active. 
\end{definition}

As Cominetti et al.\ showed in \cite[Theorem 1]{cominetti2015dynamic} these Nash flows over time can be characterized as
follows.

\begin{lemma} \label{lem:nash_flow_characterization:timedep}
A feasible flow over time $f$ is a Nash flow over time if, and only if, for all $e = uv \in E$ and all $\phi \in \Flow$
we have ${F_e^+(\l_u(\phi)) = F_e^-(\l_v(\phi))}$.
\end{lemma}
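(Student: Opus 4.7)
The plan is to prove both implications of the biconditional, each relying on the identity $F_e^+(\xi) = F_e^-(T_e(\xi))$ from \Cref{lem:technical_properties:timedep}~\ref{it:in_equals_out_at_exit_time:timedep} and on the continuity and monotonicity of $\ell_u$, $\ell_v$ and $T_e$.

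For the direction $(\Rightarrow)$, I fix $\phi \in \Flow$ and an arc $e = uv$, and abbreviate $\eta_u \coloneqq \ell_u(\phi)$, $\eta_v \coloneqq \ell_v(\phi)$. The inequality $F_e^-(\eta_v) \leq F_e^+(\eta_u)$ is immediate from $\eta_v \leq T_e(\eta_u)$ (by the definition of $\ell_v$), monotonicity of $F_e^-$, and the identity. For the reverse direction I introduce the cutoff $\xi^* \coloneqq \sup\{\xi \geq 0 : T_e(\xi) \leq \eta_v\}$; continuity of $T_e$ together with its unbounded growth yields $T_e(\xi^*) = \eta_v$, so $F_e^-(\eta_v) = F_e^+(\xi^*)$. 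If $\xi^* \geq \eta_u$ the claim follows by monotonicity of $F_e^+$; otherwise I argue by contradiction that $f_e^+ = 0$ almost everywhere on $(\xi^*, \eta_u)$. Any $\xi$ in this interval with $f_e^+(\xi) > 0$ must, by the Nash condition \eqref{eq:Nash_condition:timedep}, satisfy $\xi = \ell_u(\phi'')$ for some $\phi'' \in \Phi_e$. From $\ell_u(\phi'') < \ell_u(\phi)$ the monotonicity of $\ell_u$ forces $\phi'' < \phi$, so $\ell_v(\phi'') \leq \eta_v$; but since $\phi'' \in \Phi_e$ we have $\ell_v(\phi'') = T_e(\xi) > \eta_v$, the strict inequality coming from $\xi > \xi^*$.

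For the direction $(\Leftarrow)$, I show that every Lebesgue point $\theta$ of $f_e^+$ with $f_e^+(\theta) > 0$ satisfies $\theta \in \ell_u(\Phi_e)$; this is enough since almost every $\theta$ is a Lebesgue point. Suppose for contradiction that $\theta \notin \ell_u(\Phi_e)$. The preimage $\ell_u^{-1}(\theta)$ is non-empty by continuity of $\ell_u$; picking any $\phi_0$ in it, the assumption gives $\ell_v(\phi_0) < T_e(\theta)$. The hypothesis combined with the identity then yields $F_e^-(\ell_v(\phi_0)) = F_e^+(\theta) = F_e^-(T_e(\theta))$, so $F_e^-$ is constant on the non-degenerate interval $[\ell_v(\phi_0), T_e(\theta)]$. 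By Lebesgue density, $F_e^+(\theta) - F_e^+(\theta - h) > 0$ for all sufficiently small $h > 0$, which by the identity equals $F_e^-(T_e(\theta)) - F_e^-(T_e(\theta - h))$. But continuity of $T_e$ places $T_e(\theta - h) \in (\ell_v(\phi_0), T_e(\theta))$ once $h$ is small enough, and there $F_e^-$ is constant, so this difference must vanish --- the contradiction.

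The main obstacle is the $(\Leftarrow)$ direction: the hypothesis is an integral identity sampled only at the discrete particle positions $\ell_u(\phi)$, whereas the Nash condition is essentially pointwise on $f_e^+$, so bridging the gap requires the Lebesgue density argument together with careful continuity control of $T_e$. The $(\Rightarrow)$ direction is more routine but still hinges on the right choice of cutoff $\xi^*$ to convert the inequality $\eta_v \leq T_e(\eta_u)$ into a usable equation via the FIFO identity.
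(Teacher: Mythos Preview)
The paper does not give its own proof of this lemma; it simply remarks that ``this lemma follows with the exact same proof that was given by Cominetti et al.\ for the base model \cite[Theorem~1]{cominetti2015dynamic}'', relying on the fact that the exit times $T_e$ and earliest arrival times $\ell_v$ enjoy the same continuity and monotonicity properties here as in the base model. Your write-up therefore supplies considerably more than the paper does, and the overall strategy you use---reducing everything to the FIFO identity $F_e^+(\xi)=F_e^-(T_e(\xi))$ of \Cref{lem:technical_properties:timedep}\,\ref{it:in_equals_out_at_exit_time:timedep} together with monotonicity and continuity of $T_e$ and the $\ell$-labels---is exactly the one underlying the cited argument.

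Two small technical points deserve tightening. In the $(\Rightarrow)$ direction, the set $\{\xi\ge 0: T_e(\xi)\le\eta_v\}$ can be empty (namely when $\eta_v<\tau_e(0)=T_e(0)$), so $\xi^*$ is undefined as written; in that case $F_e^-(\eta_v)=0$ since $f_e^-\equiv 0$ on $[0,\tau_e(0))$, and the same contradiction argument you give, now run on the whole interval $(0,\eta_u)$, yields $F_e^+(\eta_u)=0$. In the $(\Leftarrow)$ direction, the claim ``$\ell_u^{-1}(\theta)$ is non-empty by continuity of $\ell_u$'' tacitly assumes $\theta\ge\ell_u(0)$. For $\theta<\ell_u(0)$ one must argue separately that $f_e^+(\theta)=0$ almost everywhere; this follows from integrated flow conservation at $u$ once one knows $F_{e'}^-(\ell_u(0))=F_{e'}^+(\ell_w(0))=0$ for every incoming arc $e'=wu$, which in turn comes from the hypothesis and an induction on the (strictly increasing along arcs) values $\ell_\cdot(0)$. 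Neither issue affects the core of your argument, and with these adjustments the proof is complete and matches the approach the paper is invoking.
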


Since the exit and the earliest arrival times have the same properties in time-varying networks as in the base model,
this lemma follows with the exact same proof that was given by Cominetti et al.\ for the base model \cite[Theorem
1]{cominetti2015dynamic}. The same is true for the following lemma; see \cite[Proposition 2]{cominetti2015dynamic}.

{\samepage
\begin{lemma}\label{lem:resetting_implies_active:timedep}
Given a Nash flow over time the following holds for all particles $\phi$:
\begin{enumerate}
  \item $E^*_\phi \subseteq E'_\phi$. \label{it:resetting_subset_active:timedep}
  \item $E'_\phi = \set{e = uv | \l_v(\phi) \geq \l_u(\phi) + \tau_e(\theta)}$. \label{it:characterization_of_active:timedep}
  \item $E^*_\phi = \set{e = uv | \l_v(\phi) > \l_u(\phi) + \tau_e(\theta)}$.
  \label{it:characterization_of_resetting:timedep}
\end{enumerate}
\end{lemma}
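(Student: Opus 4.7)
The plan is to first establish (ii), then bootstrap from it to get (iii), and finally derive (i) as an immediate corollary. Throughout, the key tool is the Nash-flow characterization from Lemma~\ref{lem:nash_flow_characterization:timedep} together with the queue/exit-time properties from Lemma~\ref{lem:technical_properties:timedep}.

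For (ii), the forward direction is a direct unfolding of definitions: if $e = uv \in E'_\phi$, then $\l_v(\phi) = T_e(\l_u(\phi)) = \l_u(\phi) + \tau_e(\l_u(\phi)) + q_e(\l_u(\phi))$, and since $q_e \geq 0$, we have $\l_v(\phi) \geq \l_u(\phi) + \tau_e(\l_u(\phi))$. For the reverse direction, suppose $\l_v(\phi) \geq \l_u(\phi) + \tau_e(\l_u(\phi))$. By the definition of $\l_v$ as the minimum over incoming arcs we always have $\l_v(\phi) \leq T_e(\l_u(\phi))$, so it suffices to rule out a strict inequality. Assume for contradiction that $\l_v(\phi) < T_e(\l_u(\phi))$. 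Combined with the hypothesis, this forces $q_e(\l_u(\phi)) > 0$, so by Lemma~\ref{lem:technical_properties:timedep}~\ref{it:q_equiv_z:timedep} and~\ref{it:positive_queue_while_emptying:timedep} the queue is strictly positive throughout the whole half-open interval $[\l_u(\phi)+\tau_e(\l_u(\phi)),\, T_e(\l_u(\phi)))$, and in particular at every time in $[\l_v(\phi), T_e(\l_u(\phi)))$. By feasibility \eqref{eq:outflow:timedep} the outflow rate on this interval equals $\capa_e > 0$, hence $F_e^-(T_e(\l_u(\phi))) > F_e^-(\l_v(\phi))$. Together with $F_e^+(\l_u(\phi)) = F_e^-(T_e(\l_u(\phi)))$ (Lemma~\ref{lem:technical_properties:timedep}~\ref{it:in_equals_out_at_exit_time:timedep}) and the Nash characterization $F_e^+(\l_u(\phi)) = F_e^-(\l_v(\phi))$ of Lemma~\ref{lem:nash_flow_characterization:timedep}, this yields a contradiction, so $\l_v(\phi) = T_e(\l_u(\phi))$ and $e \in E'_\phi$.

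For (iii), if $e \in E^*_\phi$, then $q_e(\l_u(\phi)) > 0$, so by Lemma~\ref{lem:technical_properties:timedep}~\ref{it:q_equiv_z:timedep} the queue size $z_e(\l_u(\phi)+\tau_e(\l_u(\phi)))$ is positive, which means $F_e^+(\l_u(\phi)) > F_e^-(\l_u(\phi)+\tau_e(\l_u(\phi)))$. Using $F_e^+(\l_u(\phi)) = F_e^-(\l_v(\phi))$ from Lemma~\ref{lem:nash_flow_characterization:timedep} and the monotonicity of $F_e^-$, this forces $\l_v(\phi) > \l_u(\phi)+\tau_e(\l_u(\phi))$. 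Conversely, if $\l_v(\phi) > \l_u(\phi)+\tau_e(\l_u(\phi))$, then in particular the hypothesis of (ii) holds, so $e \in E'_\phi$, i.e.\ $\l_v(\phi) = \l_u(\phi)+\tau_e(\l_u(\phi))+q_e(\l_u(\phi))$, which together with the strict inequality gives $q_e(\l_u(\phi)) > 0$ and hence $e \in E^*_\phi$.

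Finally, (i) is immediate: any $e = uv \in E^*_\phi$ satisfies $\l_v(\phi) > \l_u(\phi)+\tau_e(\l_u(\phi))$ by (iii), hence also the weak inequality, hence $e \in E'_\phi$ by (ii). The only slightly subtle step is the reverse direction of (ii), where a priori the waiting time $q_e(\l_u(\phi))$ could be strictly positive without the arc being used on a shortest path; the Nash characterization rules this out by tying the cumulative outflow at $\l_v(\phi)$ to the cumulative inflow at $\l_u(\phi)$, so I expect this to be the only step requiring care.
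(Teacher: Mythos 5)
Your proof is correct and complete. The paper itself does not spell out an argument but defers to Cominetti et al.\ (\cite{cominetti2015dynamic}, Proposition~2), noting the proof transfers verbatim; what you wrote is exactly that transfer. The decomposition (prove (ii), deduce (iii), deduce (i)) is natural, and the load-bearing step is the one you flagged: ruling out $\l_v(\phi) < T_e(\l_u(\phi))$ under the hypothesis $\l_v(\phi) \geq \l_u(\phi) + \tau_e(\l_u(\phi))$, which forces $q_e(\l_u(\phi)) > 0$, and then using \Cref{lem:technical_properties:timedep}~\ref{it:positive_queue_while_emptying:timedep} and~\ref{it:in_equals_out_at_exit_time:timedep} together with strict positivity of $\capa_e$ to get $F_e^-(T_e(\l_u(\phi))) > F_e^-(\l_v(\phi))$, contradicting the Nash characterization of \Cref{lem:nash_flow_characterization:timedep}. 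Two very small remarks: you implicitly use that \Cref{lem:nash_flow_characterization:timedep} holds for \emph{all} arcs $e \in E$, not only active ones, which is indeed what that lemma states and is exactly what makes the converse of (ii) and the forward direction of (iii) work; and when you say ``the outflow rate on this interval equals $\capa_e$'' you are using that $\theta \mapsto \theta + \tau_e(\theta)$ is a continuous increasing bijection onto $[\tau_e(0),\infty)$ so that the $z_e$-condition in \eqref{eq:outflow:timedep} applies at each time in $[\l_v(\phi), T_e(\l_u(\phi)))$. Neither point is a gap, just worth keeping in mind if you were to write this out in full.
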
 }
 Motivated by \Cref{lem:nash_flow_characterization:timedep} we define the
 \emph{underlying static flow} for $\phi \in \Flow$ by
 \[x_e(\phi) \coloneqq F_e^+(\l_u(\phi)) = F_e^-(\l_v(\phi)) \quad \text{ for all } e = uv \in E.\]
 By the definition of $\l_\source$ and the integration of \eqref{eq:flow_conservation:timedep} we have
 $\int_0^{\l_{\source}(\phi)}\inrate(\xi) \diff \xi = \phi$, and hence, $x_e(\phi)$ is a static $\source$-$\sink$-flow
 (classical network flow) of value $\phi$, whereas the derivatives $(x'_e(\phi))_{e \in E}$ form a static
 $\source$-$\sink$-flow of value $1$.

\section{Thin Flows} \label{sec:thin_flows} 
\emph{Thin flows with resetting}, introduced by Koch and
Skutella~\cite{koch2011nash}, characterize the derivatives $(x'_e)_{e \in E}$ and $(\l'_v)_{v \in V}$ of Nash flows over
time in the base model. In the following we will transfer this concept to time-varying networks.

Consider an acyclic network $G' = (V, E')$ with a source $\source$ and a sink $\sink$, such that every node is reachable
by~$\source$. Each arc is equipped with a capacity $\capa_e > 0$ and a speed ratio $\gamma_e > 0$. Furthermore, we have
a network inflow rate of $\inrate > 0$ and an arc set $E^*\subseteq E'$. We obtain the following definition.

\begin{definition}[Thin flow with resetting in a time-varying network]\label{def:thin_flows:timedep} 
A~static $\source$-$\sink$ flow $(x'_e)_{e \in E}$ of value $1$ together with a node labeling $(\l'_v)_{v \in V}$ is a
\emph{thin flow with resetting} on $E^*$ if:
\begin{alignat}{2}
\l'_{\source} &= \frac{1}{r} && \label{eq:l'_s:timedep} \tag{TF1}\\ 
\l'_v &= \min_{e = uv \in E'} \rho_e(\l'_u, x'_e) \quad && \text{ for all } v \in V \setminus \set{\source},
\label{eq:l'_v_min:timedep}\tag{TF2}\\ 
\l'_v &= \rho_e(\l'_u, x'_e) && \text{ for all } e = uv \in E' \text{ with } x'_e > 0, \label{eq:l'_v_tight:timedep}\tag{TF3}
\end{alignat}
\[\text{ where } \qquad \rho_e(\l'_u, x'_e) \coloneqq \begin{cases}
\frac{x'_e}{\capa_e} & \text{ if } e = uv \in E^*,\\
\max\Set{\gamma_e \cdot \l'_u, \frac{x'_e}{\capa_e}} & \text{ if } e = uv \in E'\backslash E^*.  
\end{cases}\]
\end{definition}
  
The derivatives of a Nash flow over time in time-varying networks do indeed form a thin flow with resetting as the
following theorem shows.

\begin{theorem} \label{thm:derivatives_form_thin_flow:timedep}
   For almost all~$\phi \!\in\! \Flow$ the derivatives $(x'_e(\phi))_{e\in E'_\phi}$ and $(\l'_v(\phi))_{v\in V}$ of a
   Nash flow over time $f = (f^+_e, f_e^-)_{e\in E}$ form a thin flow with resetting on~$E^*_\phi$ in the current
   shortest paths network $G'_\phi = (V, E'_\phi)$ with network inflow rate~$\inrate(\l_\source(\phi))$ as well as
   capacities~$\capa_e(\l_v(\phi))$ and speed ratios~$\gamma_e(\l_u(\phi))$ for each arc~$e = uv \in E$.
\end{theorem}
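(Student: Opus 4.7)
The plan is to verify conditions \eqref{eq:l'_s:timedep}--\eqref{eq:l'_v_tight:timedep} pointwise for almost every $\phi \in \Flow$. First, I would restrict attention to a full-measure set of $\phi$ at which every function we must differentiate is differentiable at the relevant point: the $\l_v$ at $\phi$, the functions $\tau_e$, $q_e$, $T_e$ at $\l_u(\phi)$, and the cumulative flows $F_e^+$ at $\l_u(\phi)$ and $F_e^-$ at $\l_v(\phi)$. Existence of such a set follows from \Cref{lem:transit_times:timedep,lem:technical_properties:timedep} together with Lebesgue's differentiation theorem, exploiting that $\l_v$ is absolutely continuous and strictly increasing so that it enjoys the Luzin~N property and the chain rule applies almost everywhere. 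I would further exclude the null set on which $E'_\phi$ or $E^*_\phi$ changes, so that both sets are locally constant around the chosen $\phi$. With this setup, \eqref{eq:l'_s:timedep} is immediate by differentiating $\int_0^{\l_\source(\phi)} \inrate(\xi) \diff\xi = \phi$, which gives $\inrate(\l_\source(\phi)) \cdot \l'_\source(\phi) = 1$. That $(x'_e(\phi))_{e\in E'_\phi}$ is a static $\source$-$\sink$ flow of value $1$ is already observed after \Cref{lem:resetting_implies_active:timedep}.

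For \eqref{eq:l'_v_min:timedep}, I start from the recursion \eqref{eq:l_v_def:timedep}, $\l_v(\phi) = \min_{e=uv \in \delta_v^-} T_e(\l_u(\phi))$. Since $E'_\phi$ is locally constant, the minimum is attained exactly on $E'_\phi$ in a neighborhood of $\phi$, so differentiation commutes with the minimum and $\l'_v(\phi) = \min_{e=uv \in E'_\phi} T'_e(\l_u(\phi)) \cdot \l'_u(\phi)$. I then substitute the derivative formula for $T_e$ from \Cref{lem:technical_properties:timedep}. Differentiating $x_e(\phi) = F_e^+(\l_u(\phi))$ yields $x'_e(\phi) = f^+_e(\l_u(\phi)) \cdot \l'_u(\phi)$. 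On any active arc, $T_e(\l_u(\phi)) = \l_v(\phi)$, and in the non-resetting case additionally $\l_u(\phi) + \tau_e(\l_u(\phi)) = \l_v(\phi)$, so $\gamma_e(\l_u(\phi)) = \lambda_e(\l_u(\phi))/\lambda_e(\l_v(\phi))$---which is precisely the speed ratio and capacity hypothesized in the theorem. The two cases of $T'_e$ then translate term-by-term into the two cases defining $\rho_e$, establishing \eqref{eq:l'_v_min:timedep}.

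For \eqref{eq:l'_v_tight:timedep}, assume $x'_e(\phi) > 0$ on an arc $e = uv$. Then $f^+_e(\l_u(\phi)) \cdot \l'_u(\phi) > 0$, and the Nash condition \eqref{eq:Nash_condition:timedep} together with local constancy of $E'_\phi$ forces $e$ to be active in a neighborhood of $\phi$. Hence $\l_v = T_e \circ \l_u$ holds in that neighborhood as an equality (not merely a minimum), and differentiating with the same case analysis as for \eqref{eq:l'_v_min:timedep} gives $\l'_v(\phi) = \rho_e(\l'_u(\phi), x'_e(\phi))$. The main technical obstacle is the simultaneous-differentiability and local-constancy bookkeeping that justifies interchanging $\tfrac{d}{d\phi}$ with the minimum in \eqref{eq:l'_v_min:timedep}; once that scaffolding is in place, the rest is essentially the chain rule combined with \Cref{lem:technical_properties:timedep}.
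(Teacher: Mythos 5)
Your treatment of \eqref{eq:l'_s:timedep} and \eqref{eq:l'_v_min:timedep} matches the paper's: pick a full-measure set where the chain rule applies, differentiate the recursion \eqref{eq:l_v_def:timedep}, and substitute the formula for $T'_e$ from \Cref{lem:technical_properties:timedep}~\ref{it:derivative_of_T:timedep}. Where you genuinely diverge is \eqref{eq:l'_v_tight:timedep}. The paper never differentiates the equality $\l_v = T_e\circ\l_u$ along active arcs; instead it differentiates the \emph{downstream} identity $x_e(\phi)=F_e^-(\l_v(\phi))$ from \Cref{lem:nash_flow_characterization:timedep} to get $x'_e(\phi)=f_e^-(\l_v(\phi))\cdot\l'_v(\phi)$, then when $x'_e(\phi)>0$ it divides and substitutes the feasibility rule \eqref{eq:outflow:timedep} for $f_e^-$. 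This is a purely pointwise algebraic computation that needs no statement about the behaviour of $E'_\phi$ or $E^*_\phi$ near $\phi$, and it also makes explicit which expression for $f_e^-$ (the $q_e>0$ versus $q_e=0$ case of \eqref{eq:outflow:timedep}) produces which branch of $\rho_e$.

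Your route instead hinges on the claim that one may ``further exclude the null set on which $E'_\phi$ or $E^*_\phi$ changes, so that both sets are locally constant around the chosen $\phi$.'' That assertion is not proved and is not obvious: the set of particles at which the active-arc set fails to be locally constant is the boundary of a closed set, and closed sets can have boundaries of positive measure. The paper sidesteps exactly this by using \Cref{lem:diff_rule_for_min:pre}, which exploits only continuity of the $T_e\circ\l_u$ (so that for small $\xi$ the min over $\delta_v^-$ agrees with the min over $E'_\phi$) and isolates the bad set as the ``proper crossings,'' rather than asking for $E'_\phi$ itself to stabilize. In fact, for \eqref{eq:l'_v_tight:timedep} you do not need local constancy at all: since $\l_v\le T_e\circ\l_u$ globally with equality at $\phi$ when $e\in E'_\phi$, and both sides are differentiable at $\phi$ (which your full-measure restriction already guarantees), the difference has an interior minimum at $\phi$, so the derivatives agree. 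Either rephrase your \eqref{eq:l'_v_tight:timedep} argument that way, or adopt the paper's cleaner chain-rule-plus-\eqref{eq:outflow:timedep} computation; as written, the local-constancy step is a real gap.
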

\begin{proof}
Let $\phi \in \Flow$ be a particle such that for all arcs $e = uv \in E$ the derivatives of $x_e$, $\l_u$, $\T_e \circ
\l_u$ and $\tau_e$ exist and $x_e'(\phi)  = f_e^+(\l_u(\phi)) \cdot \l'_u(\phi) = f_e^-(\l_v(\phi)) \cdot \l'_v(\phi)$ as
well as $1 + \tau'_e(\l_u(\phi)) = \gamma_e(\l_u(\phi))$. This is given for almost all $\phi$.

By \eqref{eq:l_v_def:timedep} we have $\int_0^{\l_{\source}(\phi)} \inrate(\xi) \diff \xi = \phi$ and taking the
derivative by applying the chain rule, yields $\inrate(\l_{\source}(\phi)) \cdot \l'_\source(\phi) = 1$, which shows
\eqref{eq:l'_s:timedep}.

Taking the derivative of \eqref{eq:l_v_def:timedep} at time $\l_u(\phi)$ by using the differentiation rule for a minimum
\ifarxiv 
(\Cref{lem:diff_rule_for_min:pre} in the appendix) 
\fi
yields
$\l'_v(\phi) = \min_{e = uv \in E'} T'_e(\l_u(\phi)) \cdot \l'_u(\phi)$. 
By using \Cref{lem:technical_properties:timedep} \ref{it:derivative_of_T:timedep} we obtain
\begin{align*}T'_e(\l_u(\phi)) \cdot \l'_u(\phi) &= \begin{cases} \frac{f_e^+(\l_u(\phi))}{\capa_e(T_e(\l_u(\phi)))} 
\cdot \l'_u(\phi) & \text{if } q_e(\l_u(\phi)) > 0,\\
\max\Set{\gamma_e(\l_u(\phi)) , \frac{f_e^+(\l_u(\phi))}{\capa_e(T_e(\l_u(\phi)))}} \cdot \l'_u(\phi)& \text{else,}
\end{cases}\\
&= \rho_e(\l'_u(\phi), x'_e(\phi)),\end{align*}
which shows \eqref{eq:l'_v_min:timedep}.
  
Finally, in the case of $f_e^-(\l_v(\phi)) \cdot \l'_v(\phi) = x'_e(\phi) > 0$ we have by \eqref{eq:outflow:timedep}
that
\begin{align*}
\l'_v(\phi) = \frac{x'_e(\phi)}{f_e^-(\l_v(\phi))} &= \begin{dcases}
\frac{x'_e(\phi)}{\min\Set{\frac{f^+_e(\l_u(\phi))}{\gamma_e(\l_u(\phi))},\capa_e(\l_v(\phi))}}\, 
&\text{ if } q_e(\l_u(\phi)) = 0,\\
\frac{x'_e(\phi)}{\capa_e(\l_v(\phi))} &\text{ else},
\end{dcases}\\
&=\begin{cases}
\max\Set{\gamma_e(\l_u(\phi)) \cdot \l'_u(\phi), \frac{x'_e(\phi)}{\capa_e(\l_v(\phi))}} 
&\text{ if } e \in E'_\phi \backslash E^*_\phi,\!\!\\
\frac{x'_e(\phi)}{\capa_e(\l_v(\phi))} &\text{ if } e \in E^*_\phi,
\end{cases} \\
&= \rho_e(\l'_u(\phi), x'_e(\phi)).
\end{align*}
This shows \eqref{eq:l'_v_tight:timedep} and finishes the proof. 
\end{proof}

In order to construct Nash flows over time in time-varying networks, we first have to show that there always exists a
thin flow with resetting.

\begin{restatable}{theorem}{existenceofthinflowstimedep} \label{thm:existence_of_thin_flows:timedep}
  Consider an acyclic graph~$G' = (V, E')$ with source~$\source$, sink~$\sink$, capacities~$\capa_e > 0$, speed
  ratios~$\gamma_e > 0$ and a subset of arcs~$E^* \subseteq E'$, as well as a network inflow $\inrate > 0$. Furthermore,
  suppose that every node is reachable from~$\source$. Then there exists a thin flow $\left((x'_e)_{e \in E},
  (\l'_v)_{v\in V}\right)$ with resetting on~$E^*$.
\end{restatable}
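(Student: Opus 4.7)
The plan is to mirror the existence proof for thin flows with resetting in the base model by Koch and Skutella \cite{koch2011nash}: formulate the conditions \eqref{eq:l'_s:timedep}--\eqref{eq:l'_v_tight:timedep} as a fixed-point problem on the polytope of static $\source$-$\sink$ flows of value $1$ in $G'$, and invoke Kakutani's fixed point theorem. The only new ingredient, the speed ratio $\gamma_e$, enters as a strictly positive factor inside the piecewise definition of $\rho_e$ and preserves its continuity and monotonicity, so the base-model strategy should go through with essentially cosmetic modifications.

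Concretely, let $X \subseteq \R^{E'}$ denote the polytope of non-negative static $\source$-$\sink$ flows of value $1$. It is non-empty, compact and convex, the first of these because every node is reachable from $\source$. For $x' \in X$, the plan is to define node labels $\l'(x') \in \R^V$ by forward induction along a topological ordering of the acyclic graph $G'$: set $\l'_\source(x') := 1/\inrate$ and, for $v \neq \source$, $\l'_v(x') := \min_{e = uv \in E'} \rho_e(\l'_u(x'), x'_e)$. Since $\gamma_e, \capa_e, \inrate > 0$, the function $\rho_e$ is continuous in both arguments, so $x' \mapsto \l'(x')$ is continuous with bounded range. Let $A(x') \subseteq E'$ denote the set of \emph{tight} arcs $e = uv$ at which $\l'_v(x') = \rho_e(\l'_u(x'), x'_e)$. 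By the minimum condition, every node $v \neq \source$ has at least one incoming tight arc, and inductively, $\sink$ is reachable from $\source$ through tight arcs only. Define the correspondence $\Phi \colon X \rightrightarrows X$ by $\Phi(x') := \set{y' \in X \mid \supp(y') \subseteq A(x')}$. Then $\Phi(x')$ is non-empty (route unit flow along some tight $\source$-$\sink$ path), convex and compact (cut out from $X$ by linear constraints), and $\Phi$ is upper hemicontinuous because the graph of $A$ is closed by the continuity of $\l'$ and $\rho_e$. Kakutani's fixed point theorem then yields a fixed point which, together with its induced labels, satisfies \eqref{eq:l'_s:timedep} and \eqref{eq:l'_v_min:timedep} by construction and \eqref{eq:l'_v_tight:timedep} because every arc of positive flow is tight.

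The main obstacle lies in verifying the closed-graph property of $A$ in view of the piecewise definition of $\rho_e$: the $\max$ in the non-resetting case and the split between resetting and non-resetting arcs introduce boundary cases, such as tied minima at a node, the threshold $x'_e = 0$, or the equality $\gamma_e \l'_u = x'_e/\capa_e$, at which upper hemicontinuity must be checked carefully. Each branch of $\rho_e$ is individually continuous and the $\max$ of continuous functions is continuous, so the property follows from standard arguments; nonetheless, the details deserve careful attention in the final proof, as does the verification that the forward-induction definition of $\l'$ remains well-defined throughout $X$ (which uses the strict positivity of $\inrate$, $\capa_e$ and $\gamma_e$ to keep $\l'_v$ finite and bounded).
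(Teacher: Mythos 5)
Your proposal is correct and follows essentially the same route as the paper's appendix proof: define labels $\l'(x')$ by the recursion \eqref{eq:l'_labels:base} on the polytope $X$ of unit static $s$-$t$-flows, let the correspondence send $x'$ to the flows supported on tight arcs, and apply Kakutani's fixed point theorem; your $\Phi(x')$ is exactly the paper's $\Gamma(x')$. One small remark on the ``main obstacle'' you flag: the partition of $E'$ into $E^*$ and $E'\setminus E^*$ is fixed data, not a condition depending on $x'$, so $\rho_e$ is a single continuous function on each arc (a $\max$ of two continuous functions, or a ratio) and the closed-graph verification needs no case analysis at thresholds like $x'_e=0$ or $\gamma_e\l'_u = x'_e/\capa_e$ --- the paper's argument is just the straightforward sequence/continuity one you describe in your last sentence.
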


This proof works exactly as the proof for the existence of thin flows in the base model presented by Cominetti et al.\
\cite[Theorem 3]{cominetti2015dynamic}. 
\ifarxiv
In addition, a detailed proof utilizing Kakutani's fixed point theorem is given in the appendix.
\fi

\section{Constructing Nash Flows Over Time} \label{sec:construction}
In the remaining part of this paper we assume that for all $e \in E$ the functions~$\capa_e$ and $\lambda_e$ as well as
the network inflow rate function $\inrate$ are right-constant. In order to show the existence of Nash flows over time in
time-varying networks we use the same $\alpha$-extension approach as used by Koch and Skutella in \cite{koch2011nash}
for the base model. The key idea is to start with the empty flow over time and expand it step by step by using a thin
flow with resetting.

Given a \emph{restricted Nash flow over time} $f$ on $[0, \phi]$, i.e., a Nash flow
over time where only the particles in $[0, \phi]$ are considered, we obtain well-defined earliest arrival times
$(\l_v(\phi))_{v\in V}$ for particle $\phi$. Hence, by \Cref{lem:resetting_implies_active:timedep} we can determine the
current shortest paths network $G'_\phi = (V, E'_\phi)$ with the resetting arcs~$E^*_\phi$, the capacities
$\capa_e(\l_v(\phi))$ and speed ratios $\gamma_e(\l_u(\phi))$ for all arcs~${e=uv\in E'}$ as well as the network inflow rate
$\inrate(\l_\source(\phi))$. By \Cref{thm:existence_of_thin_flows:timedep} there exists a thin flow $\left((x'_e)_{e \in
E'}, (\l'_v)_{v \in V}\right)$ on $G'_\phi$ with resetting on $E^*_\phi$. For $e \not\in E'_\phi$ we set $x'_e \coloneqq
0$. We extend the $\l$- and $x$-functions for some $\alpha> 0$ by
\[\l_v(\phi + \xi) \coloneqq \l_v(\phi) + \xi \cdot \l'_v \quad \text{and} \quad
x_e(\phi) \coloneqq x_e(\phi) + \xi \cdot x'_e \qquad \text{for all } \xi \in [0, \alpha)\]
and the in- and outflow rate functions by
\begin{align*}
f_e^+(\theta) \coloneqq \frac{x'_e}{\l'_u} \text{ for } \theta \in [\l_u(\phi), \l_u(\phi \!+\! \alpha));\qquad\!\!
  f_e^-(\theta) \coloneqq \frac{x'_e}{\l'_v} \text{ for } \theta \in [\l_v(\phi), \l_v(\phi \!+\! \alpha)).\end{align*}
We call this extended flow over time \emph{$\alpha$-extension}. Note that $\l'_u = 0$ means that $[\l_u(\phi),
\l_u(\phi + \alpha))$ is empty, and the same holds for $\l'_v$.

An $\alpha$-extension is a restricted Nash flow over time, which we will prove
later on, as long as the $\alpha$ stays within reasonable bounds. Similar to the base model we have to ensure that
resetting arcs stay resetting and non-active arcs stay non-active for all particles in $[\phi, \phi + \alpha)$. Since
the transit times may now vary over time, we have the following conditions for all $\xi \in [0, \alpha)$:

\begin{align}
\l_v(\phi) + \xi \cdot \l'_v - \l_u(\phi) - \xi \cdot \l'_u &> \tau_e(\l_u(\phi) + \xi \cdot \l'_u)) \quad 
\text{ for every } e \in E^*_\phi,\label{eq:alpha_resetting:timedep}\\
\l_v(\phi) + \xi \cdot \l'_v - \l_u(\phi) - \xi \cdot \l'_u &< \tau_e(\l_u(\phi) + \xi \cdot \l'_u)) \quad 
\text{ for every } e \in E \setminus E'_\phi.\label{eq:alpha_others:timedep}
\end{align}

Furthermore, we need to ensure that the capacities of all active arcs and the network inflow rate do not change within
the phase:
\begin{align}
\capa_e(\l_v(\phi)) &= \capa_e(\l_v(\phi) + \xi \cdot \l'_v) \quad \text{ for every } e \in E'_\phi \text{ and all } 
\xi \in [0, \alpha). \label{eq:alpha_capacity:timedep}\\
\inrate(\l_\source(\phi)) &= \inrate(\l_\source(\phi) + \xi \cdot \l'_\source) \quad \;\; \text{ for all } 
\xi \in [0, \alpha). \label{eq:alpha_inrate:timedep}
\intertext{Finally, the speed ratios need to stay constant for all active arcs, i.e.,}
\gamma_e(\l_u(\phi)) &= \gamma_e(\l_u(\phi) + \xi \cdot \l'_u) \quad \text{ for every } e \in E'_\phi \text{ and all } 
\xi \in [0, \alpha). \label{eq:alpha_speed_labels:timedep}
\end{align}

We call an $\alpha > 0$ \emph{feasible} if it satisfies \eqref{eq:alpha_resetting:timedep} to
\eqref{eq:alpha_speed_labels:timedep}.


\begin{lemma}
Given a restricted Nash flow over time $f$ on $[0, \phi]$ then for right-constant capacities and speed limits there
always exists a feasible $\alpha > 0$.
\end{lemma}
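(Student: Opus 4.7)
The plan is to obtain $\alpha$ as the minimum of finitely many positive upper bounds, one for each of the five feasibility conditions applied to each arc (or to the source), so it suffices to show that each of \eqref{eq:alpha_resetting:timedep}--\eqref{eq:alpha_speed_labels:timedep} individually admits a positive right-interval of $\xi$ on which it holds. Since $E$ is finite, the minimum of all these bounds is still strictly positive and yields a feasible $\alpha$.

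For the three ``constant data'' conditions \eqref{eq:alpha_capacity:timedep}, \eqref{eq:alpha_inrate:timedep}, and \eqref{eq:alpha_speed_labels:timedep}, I would invoke right-constancy of $\capa_e$, $\inrate$, and $\lambda_e$. The capacity and inflow cases are immediate: right-constancy at $\l_v(\phi)$ resp.\ $\l_\source(\phi)$ yields a right-interval on which the function is constant, and any $\alpha$ with $\alpha \l'_v$ resp.\ $\alpha \l'_\source$ inside this interval works (and if the corresponding derivative vanishes, the condition is vacuous). The speed-ratio condition is more delicate since $\gamma_e(\theta)=\lambda_e(\theta)/\lambda_e(\theta+\tau_e(\theta))$ couples two time points through $\tau_e$. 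Right-constancy of $\lambda_e$ at $\l_u(\phi)$ handles the numerator; for the denominator I would use monotonicity of $\theta\mapsto\theta+\tau_e(\theta)$ from \Cref{lem:transit_times:timedep}\ref{it:queueing_time_monotone:timedep} together with continuity of $\tau_e$ (\Cref{lem:transit_times:timedep}\ref{it:tau_is_diffbar:timedep}) to guarantee that, for $\theta$ in a sufficiently small right-neighbourhood of $\l_u(\phi)$, the lookahead point $\theta+\tau_e(\theta)$ lies in a right-neighbourhood of $\l_u(\phi)+\tau_e(\l_u(\phi))$ on which $\lambda_e$ is right-constant.

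For the two strict inequality conditions \eqref{eq:alpha_resetting:timedep} and \eqref{eq:alpha_others:timedep}, the argument is purely by continuity. At $\xi=0$, \Cref{lem:resetting_implies_active:timedep}\ref{it:characterization_of_resetting:timedep} gives $\l_v(\phi)-\l_u(\phi)>\tau_e(\l_u(\phi))$ for every $e\in E^*_\phi$, and negating \Cref{lem:resetting_implies_active:timedep}\ref{it:characterization_of_active:timedep} gives $\l_v(\phi)-\l_u(\phi)<\tau_e(\l_u(\phi))$ for every $e\in E\setminus E'_\phi$. Both sides of each condition are continuous functions of $\xi$ (the left-hand side is affine, and the right-hand side is $\tau_e$ composed with an affine function, continuous by \Cref{lem:transit_times:timedep}\ref{it:tau_is_diffbar:timedep}), so the strict inequality at $\xi=0$ persists on some right-interval $[0,\epsilon_e)$.

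The main obstacle is the speed-ratio condition, because it is the only one where the data to be controlled depends on the current state of the flow through $\tau_e$ rather than directly on the parameter $\phi+\xi$. The crucial observation that resolves it is that the coupling point $\theta+\tau_e(\theta)$ can only move to the right as $\theta$ moves to the right, so right-constancy of $\lambda_e$ at a single point $\l_u(\phi)+\tau_e(\l_u(\phi))$ is exactly the right hypothesis; had the functions been only left-constant, this step would fail.
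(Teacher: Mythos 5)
Your proposal is correct and follows the paper's own approach: continuity of $\tau_e$ together with \Cref{lem:resetting_implies_active:timedep} handles the two strict-inequality conditions, right-constancy of $\capa_e$, $\inrate$, $\lambda_e$ (hence of $\gamma_e$) handles the three constancy conditions, and finiteness of $E$ gives a positive overall $\alpha$. The one place you add useful detail is in unpacking why right-constancy of $\lambda_e$ implies right-constancy of $\gamma_e$ — via monotonicity and continuity of $\theta \mapsto \theta + \tau_e(\theta)$ — which the paper asserts in a single clause without elaboration.
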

\begin{proof}
By \Cref{lem:resetting_implies_active:timedep} we have that $\l_v(\phi) - \l_u(\phi) > \tau_e(\phi)$ for all $e \in
E^*_\phi$ and $\l_v(\phi) - \l_u(\phi) < \tau_e(\phi)$ for all $e \in E \setminus E'_\phi$. Since $\tau_e$ is continuous
there is an $\alpha_1 > 0$ such that \eqref{eq:alpha_resetting:timedep} and \eqref{eq:alpha_others:timedep} are
satisfied for all $\xi \in [0, \alpha_1)$. Since $\capa_e$, $\inrate$ and $\lambda_e$ are right-constant so is
$\gamma_e$, and hence, there is an $\alpha_2 > 0$ such that \eqref{eq:alpha_capacity:timedep},
\eqref{eq:alpha_inrate:timedep} and \eqref{eq:alpha_speed_labels:timedep} are fulfilled for all $\xi \in [0, \alpha_2)$.
Clearly, $\alpha \coloneqq \min \Set{\alpha_1, \alpha_2} > 0$ is feasible.
\end{proof}

For the maximal feasible $\alpha$ we call the interval $[\phi, \phi + \alpha)$ a \emph{thin flow phase}.
\begin{restatable}{lemma}{extensionisflowtimedep} \label{lem:extension_is_flow:timedep}
An $\alpha$-extension is a feasible flow over time and the extended $\l$-labels coincide with the earliest arrival times,
i.e., they satisfy \Cref{eq:l_v_def:timedep} for all $\varphi \in [\phi, \phi + \alpha)$.
\end{restatable}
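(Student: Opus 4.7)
The plan is to verify the two claims in turn: the extended family $(f^+_e, f^-_e)_{e \in E}$ is a feasible flow over time, and the linearly extended labels $\l_v(\varphi) = \l_v(\phi) + (\varphi-\phi)\l'_v$ satisfy the recursion~\eqref{eq:l_v_def:timedep} throughout $[\phi, \phi+\alpha)$.

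For feasibility, since $(x'_e)_{e \in E}$ is a static $\source$-$\sink$-flow of value $1$ (with $x'_e = 0$ outside $E'_\phi$), the balance $\sum_{e\in\delta^+_v} x'_e = \sum_{e\in\delta^-_v} x'_e$ holds at every interior node. At time $\theta = \l_v(\phi) + \xi\l'_v$ with $\xi \in [0,\alpha)$, the inflow rate on each $e = vw \in \delta^+_v$ equals $x'_e/\l'_v$ and the outflow rate on each $e = wv \in \delta^-_v$ equals $x'_e/\l'_v$, so dividing the static balance by $\l'_v$ yields~\eqref{eq:flow_conservation:timedep}; at the source, \eqref{eq:l'_s:timedep} combined with~\eqref{eq:alpha_inrate:timedep} gives $\sum_{e\in\delta^+_\source} x'_e/\l'_\source = 1/\l'_\source = \inrate(\l_\source(\phi))$. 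For the outflow law~\eqref{eq:outflow:timedep} I would split by arc type: for $e \in E^*_\phi$, \eqref{eq:l'_v_tight:timedep} with $e \in E^*$ gives $f^-_e = x'_e/\l'_v = \capa_e$, matching the queue-active branch; for $e \in E'_\phi \setminus E^*_\phi$, the other branch yields $\l'_v = \max\{\gamma_e\l'_u, x'_e/\capa_e\}$, so $f^-_e = \min\{f^+_e/\gamma_e, \capa_e\}$; for $e \notin E'_\phi$ we have $x'_e = 0$, so no new in- or outflow is added, and the pre-existing outflow is already zero in this range by the Nash property of the restricted flow. Throughout, \eqref{eq:alpha_capacity:timedep} and~\eqref{eq:alpha_speed_labels:timedep} ensure the capacities and speed ratios used by the thin flow equal those encountered during the phase.

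For the earliest arrival times, the source case reduces to $\int_{\l_\source(\phi)}^{\l_\source(\varphi)} \inrate(\xi)\diff\xi = \inrate(\l_\source(\phi))\,(\varphi-\phi)\,\l'_\source = \varphi-\phi$ by~\eqref{eq:alpha_inrate:timedep} and~\eqref{eq:l'_s:timedep}. For $v \neq \source$, I would show $\l_v(\varphi) = \min_{e = uv} T_e(\l_u(\varphi))$ via three subcases on an incoming arc $e = uv$. If $e \notin E'_\phi$, \eqref{eq:alpha_others:timedep} forces $\tau_e(\l_u(\varphi)) > \l_v(\varphi) - \l_u(\varphi)$, so $T_e(\l_u(\varphi)) > \l_v(\varphi)$ and $e$ cannot attain the minimum. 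If $e \in E'_\phi$ with $x'_e > 0$, I differentiate $T_e \circ \l_u$ via~\Cref{lem:technical_properties:timedep}\ref{it:derivative_of_T:timedep}; the frozen capacity and speed ratio together with the queue-positive vs.\ queue-empty dichotomy give $\frac{\diff}{\diff\varphi}T_e(\l_u(\varphi)) = \rho_e(\l'_u, x'_e) \cdot 1 = \l'_v$, and since $e$ is active at $\phi$ the initial value matches, so the equality $T_e(\l_u(\varphi)) = \l_v(\varphi)$ persists. If $e \in E'_\phi$ with $x'_e = 0$, no new flow enters during the phase, the queue stays empty (because $e \notin E^*_\phi$), and linearity of $\tau_e$ from the constant $\gamma_e$ gives $T_e(\l_u(\varphi)) = \l_v(\phi) + (\varphi-\phi)\gamma_e\l'_u \geq \l_v(\varphi)$, with equality iff the minimum in~\eqref{eq:l'_v_min:timedep} is attained at $e$. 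Combining the three cases, the minimum is realized by an active arc and equals $\l_v(\varphi)$.

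The main obstacle I expect is the case $e \in E'_\phi$ with $x'_e > 0$: one has to track queue size, transit time, and capacity-at-exit simultaneously and show that their joint evolution locks $T_e(\l_u(\varphi))$ to $\l_v(\varphi)$. The conditions~\eqref{eq:alpha_capacity:timedep}--\eqref{eq:alpha_speed_labels:timedep} were crafted precisely to freeze the capacity and speed ratio so that the computation collapses to integrating \Cref{lem:technical_properties:timedep}\ref{it:derivative_of_T:timedep} against $\l'_u$, matching the definition of $\rho_e$ in~\Cref{def:thin_flows:timedep}; the queue-empty versus queue-positive status remains stable on the phase thanks to~\eqref{eq:alpha_resetting:timedep} and~\eqref{eq:alpha_others:timedep}, which is what makes the case distinction in $\rho_e$ consistent throughout $[\phi, \phi+\alpha)$.
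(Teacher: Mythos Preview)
Your approach is essentially the paper's: verify node conservation from the static balance, verify the outflow law~\eqref{eq:outflow:timedep} by case analysis on the arc type, and verify the $\l$-recursion by case analysis on the incoming arcs. The principal difference is the case split for the $\l$-recursion. The paper distinguishes (i) $e \notin E'_\phi$, (ii) $e \in E'_\phi \setminus E^*_\phi$ with $\gamma_e \l'_u \geq x'_e/\capa_e$ (no queue builds), and (iii) $e \in E^*_\phi$ or $\gamma_e \l'_u < x'_e/\capa_e$ (queue positive or building), proving $\l_v \leq T_e(\l_u)$ in each case and then invoking \eqref{eq:l'_v_min:timedep} for at least one arc with equality. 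You instead split by $x'_e > 0$ versus $x'_e = 0$, using \eqref{eq:l'_v_tight:timedep} to get equality directly when $x'_e > 0$. Both decompositions work and rely on the same computation of $T'_e(\l_u)\cdot\l'_u = \rho_e(\l'_u,x'_e)$ via \Cref{lem:technical_properties:timedep}\ref{it:derivative_of_T:timedep}.

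There is, however, a small but genuine gap in your third subcase. You assert that $e \in E'_\phi$ with $x'_e = 0$ implies $e \notin E^*_\phi$; this is false. A resetting arc can carry zero thin-flow value (e.g.\ an arc into a node from which $\sink$ is unreachable). In that situation $\rho_e(\l'_u,0) = 0$, so \eqref{eq:l'_v_min:timedep} forces $\l'_v = 0$, and your formula $T_e(\l_u(\varphi)) = \l_v(\phi) + (\varphi-\phi)\gamma_e\l'_u$ is wrong: the queue is positive, so $T'_e = f^+_e/\capa_e = 0$ and $T_e$ stays constant at $\l_v(\phi)$, not growing at rate $\gamma_e\l'_u$. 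If this $e$ is the only arc attaining the minimum in \eqref{eq:l'_v_min:timedep}, your argument fails to establish equality. The fix is easy: treat $e \in E^*_\phi$ with $x'_e = 0$ separately, noting $\l'_v = 0$ and that $T_e(\l_u(\cdot))$ is constant because $f^+_e = 0$ and the queue stays positive by \eqref{eq:alpha_resetting:timedep}. The paper sidesteps this altogether by splitting on queue behavior rather than on $x'_e$; both your feasibility argument (which invokes \eqref{eq:l'_v_tight:timedep} on $E^*_\phi$ without checking $x'_e > 0$) and your $\l$-recursion argument need this patch.
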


The final step is to show that an $\alpha$-extension is a restricted Nash flow over time on $[0, \phi + \alpha)$ and
that we can continue this process up to $\infty$.

\begin{restatable}{theorem}{extension} \label{thm:extension:timedep}
Given a restricted Nash flow over time $f = (f_e^+, f_e^-)_{e\in E}$ on $[0, \phi)$ in a time-varying network and a
feasible $\alpha > 0$ then the $\alpha$-extension is a restricted Nash flow over time on~$[0, \phi + \alpha)$.
\end{restatable}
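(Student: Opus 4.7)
The plan is to lean on \Cref{lem:extension_is_flow:timedep}, which already guarantees that the $\alpha$-extension is a feasible flow over time and that its labels $(\l_v)_{v\in V}$ satisfy the earliest-arrival-time recursion~\eqref{eq:l_v_def:timedep} on all of $[0, \phi + \alpha)$. Hence only the Nash condition itself remains to be checked. Rather than wrestling with~\eqref{eq:Nash_condition:timedep} directly, I would invoke the characterization of \Cref{lem:nash_flow_characterization:timedep} and prove
\[F_e^+(\l_u(\varphi)) = F_e^-(\l_v(\varphi)) \qquad \text{for all } e = uv \in E \text{ and all } \varphi \in [0, \phi + \alpha).\]

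For $\varphi \in [0, \phi)$ this equality is inherited from the given restricted Nash flow, and continuity of $F_e^\pm$ together with continuity of the labels (see \Cref{lem:transit_times:timedep} and \Cref{lem:technical_properties:timedep}) propagates it to the boundary point $\varphi = \phi$. For a particle in the new phase, write $\varphi = \phi + \xi$ with $\xi \in (0, \alpha)$. By construction the inflow rate is constant on the phase, $f_e^+(\theta) = x'_e / \l'_u$ on $[\l_u(\phi), \l_u(\phi + \alpha))$, so
\[F_e^+(\l_u(\phi + \xi)) - F_e^+(\l_u(\phi)) = \frac{x'_e}{\l'_u} \cdot \bigl(\l_u(\phi + \xi) - \l_u(\phi)\bigr) = \xi\, x'_e,\]
and the analogous computation for the outflow side yields $F_e^-(\l_v(\phi + \xi)) - F_e^-(\l_v(\phi)) = \xi\, x'_e$. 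Combined with the base equality at $\varphi = \phi$, the desired identity follows, and simultaneously confirms $x_e(\varphi) = F_e^+(\l_u(\varphi)) = F_e^-(\l_v(\varphi))$, matching the extension definition of the underlying static flow.

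The main obstacle is the degenerate case where $\l'_u$ or $\l'_v$ vanishes and the ratios $x'_e / \l'_u$ and $x'_e / \l'_v$ appearing in the definition of $f_e^\pm$ become ill-defined. Here I would argue that whenever $x'_e > 0$ the thin-flow condition~\eqref{eq:l'_v_tight:timedep} forces $\l'_v \geq x'_e / \capa_e > 0$, and an induction along the acyclic current shortest paths network $G'_\phi$, starting from $\l'_\source = 1/\inrate > 0$ and using~\eqref{eq:l'_v_min:timedep}, yields $\l'_u > 0$ at every node reached by positive flow. In the remaining degenerate sub-case $\l'_u = 0$ (or $\l'_v = 0$) paired with $x'_e = 0$, the corresponding interval $[\l_u(\phi), \l_u(\phi + \alpha))$ is empty, no new inflow or outflow is added on that side of $e$, and the claimed equality reduces to the one already obtained at $\varphi = \phi$. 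For arcs $e \in E \setminus E'_\phi$ one similarly has $x'_e = 0$ by the extension definition, and condition~\eqref{eq:alpha_others:timedep} keeps $e$ strictly non-active throughout the phase so that no particle in $[\phi, \phi + \alpha)$ selects it, which is exactly what the Nash condition demands.
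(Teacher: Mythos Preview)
Your argument is correct and follows essentially the same route as the paper's own proof: invoke \Cref{lem:extension_is_flow:timedep} for feasibility and the label recursion, then verify the characterization of \Cref{lem:nash_flow_characterization:timedep} by computing the constant-rate increments $F_e^+(\l_u(\phi+\xi))-F_e^+(\l_u(\phi)) = \xi\,x'_e = F_e^-(\l_v(\phi+\xi))-F_e^-(\l_v(\phi))$. Your treatment is in fact more careful than the paper's two-line version, since you explicitly handle the degenerate cases $\l'_u=0$ or $\l'_v=0$ (one small remark: the positivity of $\l'_u$ when $x'_e>0$ follows most directly from flow conservation together with \eqref{eq:l'_v_tight:timedep} applied to a positive-flow incoming arc, rather than from \eqref{eq:l'_v_min:timedep}).
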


\begin{proof}\label{proof:extension:timedep}
Lemma~\ref{lem:nash_flow_characterization:timedep} yields $F_e^+(\l_u(\varphi)) = F_e^-(\l_v(\varphi))$ for all~$\varphi
\in [0, \phi)$, so for $\xi \in [0, \alpha)$ it holds that
\[F_e^+(\l_u(\phi + \xi)) = F_e^+(\l_u(\phi)) + \frac{x'_e}{\l'_u} \cdot \xi \cdot \l'_u = 
 F_e^-(\l_v(\phi)) + \frac{x'_e}{\l'_v} \cdot \xi \cdot \l'_v = F_e^-(\l_v(\phi + \xi)).\]
It follows again by \Cref{lem:nash_flow_characterization:timedep} together with \Cref{lem:extension_is_flow:timedep}
that the $\alpha$-extension is a restricted Nash flow over time on~$[0, \phi + \alpha)$.
\end{proof}

Finally, we obtain our main result:
\begin{restatable}{theorem}{finishingnashflow} \label{thm:finishing_nash_flow:timedep}
There exists a Nash flow over time in every time-varying network with right-constant speed limits, capacities and
network inflow rates.
\end{restatable}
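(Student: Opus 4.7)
The plan is to iterate the $\alpha$-extension from \Cref{thm:extension:timedep}, starting at the trivial restricted Nash flow on $[0,0]$, and to show the iteration exhausts all of $\Flow$. Concretely, set
\[\Phi \coloneqq \sup\Set{\phi \in \Flow | \text{a restricted Nash flow over time on } [0,\phi] \text{ exists}}.\]
The preceding feasibility lemma yields $\Phi > 0$. A standard pasting argument---using \Cref{lem:nash_flow_characterization:timedep}, which forces any two restricted Nash flows on overlapping initial segments to coincide on the overlap (since $F_e^\pm$ are determined by $\l_u$, $\l_v$ and $x_e$)---assembles any sequence $\phi_i \uparrow \Phi$ of restricted Nash flows into a restricted Nash flow on $[0, \Phi)$.

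I would then show $\Phi = \infty$ by contradiction. Assuming $\Phi < \infty$, the non-decreasing labels $\l_v$ and underlying static flow values $x_e$ admit pointwise limits as $\phi \uparrow \Phi$. Provided every $\l_v(\Phi) < \infty$, continuity extends the restricted Nash flow to $[0, \Phi]$; the feasibility lemma then supplies some $\alpha > 0$ at $\Phi$, and \Cref{thm:extension:timedep} produces a restricted Nash flow on $[0, \Phi + \alpha)$, contradicting the definition of $\Phi$. Hence $\Phi = \infty$ and the resulting object is a Nash flow over time on all of $\Flow$.

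The main obstacle is therefore ruling out Zeno-style blow-up, i.e., showing $\l_v(\Phi) < \infty$ for every $v \in V$. For the source, $\l_\source(\Phi)$ is the unique time at which the cumulative network inflow equals $\Phi$, which is finite by the assumption $\int_0^\theta \inrate(\xi) \diff \xi \to \infty$. For a general node $v$, I would fix any $\source$-$v$-path $P_v$ in $G$; a particle entering at time $\l_\source(\phi)$ and following $P_v$ reaches $v$ no later than $\l_\source(\phi) + \sum_{e \in P_v}(\tau_e(\theta_e) + q_e(\theta_e))$, where $\theta_e$ denotes its arrival time at the tail of $e$. Each transit time $\tau_e$ is finite at finite arguments by \Cref{lem:transit_times:timedep}, and each waiting time $q_e$ is finite because the queue size $z_e$ is bounded by the total flow sent into $e$ (itself at most $\Phi$) and $\int_0^\theta \capa_e(\xi) \diff \xi \to \infty$. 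An induction along $P_v$ starting from the bound on $\l_\source(\Phi)$ then yields $\l_v(\Phi) < \infty$, closing the contradiction and establishing the theorem.
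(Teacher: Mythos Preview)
Your outline has one genuine gap: the ``standard pasting argument'' relies on the claim that any two restricted Nash flows over time on overlapping initial segments must coincide on the overlap. \Cref{lem:nash_flow_characterization:timedep} does \emph{not} give you this; it only says that \emph{within a single} Nash flow the identity $F_e^+(\l_u(\phi)) = F_e^-(\l_v(\phi))$ holds. It does not assert that the labels $\l_v$, the underlying static flows $x_e$, or the rate functions $f_e^\pm$ are uniquely determined by the network data. Uniqueness of Nash flows over time is a separate (and nontrivial) question that the paper neither states nor proves, so you cannot invoke it to glue together an arbitrary sequence of restricted Nash flows witnessing $\phi_i \uparrow \Phi$. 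Without it, your supremum $\Phi$ may be witnessed by mutually incompatible flows, and the pasting step collapses.

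The paper sidesteps this entirely: it never takes a supremum over all restricted Nash flows, but instead builds a single chain by repeatedly applying \Cref{thm:extension:timedep}, so nestedness holds \emph{by construction}. Limits of this chain are then handled by transfinite induction (extend, take limits at accumulation points, extend again), which is why the argument does not reduce to a single $\sup$ and a contradiction. If you want to keep your $\Phi$-formulation, you should restrict the set to those $\phi$ reachable by the specific $\alpha$-extension process (possibly transfinitely iterated), so that any two witnesses are nested by definition.

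Your Zeno-avoidance argument, on the other hand, is different from the paper's and worth keeping. The paper bounds the $\l$-labels at a finite accumulation point by noting that the thin-flow derivatives $\l'_v$ are bounded (``only finitely many different thin flows''); you instead bound $\l_v(\Phi)$ directly via a path from $\source$, using that cumulative arc inflow is at most $\Phi$ together with the divergence assumptions on $\int \capa_e$ and $\int \lambda_e$. This is more elementary and does not depend on any finiteness of the set of possible thin-flow parameters, which in the time-varying setting (where $\capa_e(\l_v(\phi))$, $\gamma_e(\l_u(\phi))$ and $\inrate(\l_\source(\phi))$ vary with $\phi$) is a genuine advantage.
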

\begin{proof}\label{proof:finishing_nash_flows:timedep}
The process starts with the empty flow over time, i.e., a restricted Nash flow over time for~$[0, 0)$. We apply
\Cref{thm:extension:timedep} with a maximal feasible $\alpha$. If one of the $\alpha$ is unbounded we are done.
Otherwise, we obtain a sequence $(f_i)_{i\in\N}$, where $f_i$ is a restricted Nash flow over time for $[0,\phi_i)$, with
a strictly increasing sequence $(\phi_i)_{i \in \N}$. In the case that this sequence has a finite limit, say
$\phi_\infty < \infty$, we define a restricted Nash flow over time $f^\infty$ for $[0, \phi_\infty)$ by using the
point-wise limit of the $x$- and $\l$-labels, which exists due to monotonicity and boundedness of these functions. Note
that there are only finitely many different thin flows, and therefore, the derivatives $x'$ and $\l'$ are bounded. Then
the process can be restarted from this limit point. This so called \emph{transfinite induction} argument works as
follows: Let $\mathcal{P}_G$ be the set of all particles $\phi \in \Flow$ for which there exists a restricted Nash flow
over time on $[0, \phi)$ constructed as described above. The set $\mathcal{P}_G$ cannot have a maximal element because
the corresponding Nash flow over time could be extended by using \Cref{thm:extension:timedep}. But $\mathcal{P}_G$
cannot have an upper bound either since the limit of any convergent sequence would be contained in this set. Therefore,
there exists an unbounded increasing sequence $(\phi_i)_{i = 1}^\infty \in \mathcal{P}_G$. As a restricted Nash flow
over time on $[0, \phi_{i+1}]$ contains a restricted Nash flow over time on $[0, \phi_i]$ we can assume that there
exists a sequence of \emph{nested} restricted Nash flow over time. Hence, we can construct a Nash flow over time $f$ on
$[0, \infty)$ by taking the point-wise limit of the $x$- and $\l$-labels, completing the proof.
\end{proof}

\begin{figure}[p]
\centering \includegraphics[page=1]{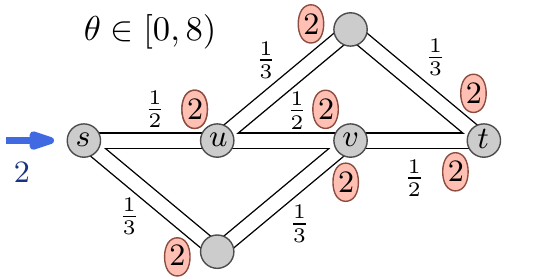}\includegraphics[page=2]{nash_flow_timedep} 
\\[0.5em]{\color{gray}\hrule}\vspace{0.5em}
\includegraphics[page=1]{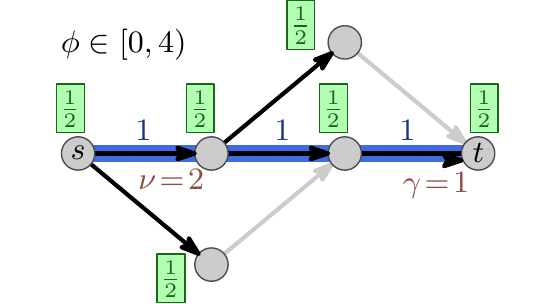}\includegraphics[page=2]{thin_flows_timedep}
\includegraphics[page=3]{thin_flows_timedep}\includegraphics[page=4]{thin_flows_timedep}
\includegraphics[page=5]{thin_flows_timedep}\includegraphics[page=6]{thin_flows_timedep}\\[0.5em]
{\color{gray}\hrule}\vspace{0.5em}
\includegraphics[page=3]{nash_flow_timedep}\includegraphics[page=4]{nash_flow_timedep}\\[-0.5em]
\includegraphics[page=5]{nash_flow_timedep}\includegraphics[page=6]{nash_flow_timedep} 
\caption{A Nash flow over time with seven thin flow phases in a time-varying network. } \label{fig:nash_flow_example:timedep}
\end{figure}

\subsubsection{Example.} An example of a Nash flow over time in a time-varying network together with the corresponding
thin flows is shown in \Cref{fig:nash_flow_example:timedep} on the next page. \emph{On the top:} The network properties
before time $8$ (\emph{left side}) and after time $8$ (\emph{right side}). \emph{In the middle:} There are seven thin
flow phases. Note that the third and forth phase (both depicted in the same network) are almost identical and only the
speed ratio of arc $v \sink$ changes, which does not influence the thin flow at all. \emph{At the bottom:} Some key
snapshots in time of the resulting Nash flow over time. The current speed limit $\lambda_{v \sink}$ is visualized by the
length of the green arrow and, for $\theta \geq 8$, the reduced capacity $\capa_{\source u}(\theta)$ is displayed by a
red bottle-neck.

As displayed at the top the capacity of arc $\source u$ drops from $2$ to $1$ at time $8$ and, at the same time, the
speed limit of arc $v\sink$ decreases from $\frac{1}{2}$ to $\frac{1}{6}$. The first event for particle $4$ is due to a
change of the speed ratio leading to an increase of $\l'_\sink$. For particle $6$, the top path becomes active and is
taken by all following flow as particles on arc $v \sink$ are still slowed down. For particle $8$, the speed ratio at
arc $v \sink$ changes back to~$1$ but, as this arc is inactive, this does not change anything. Particle $12$ is the
first to experience the reduced capacity on arc $\source u$. The corresponding queue of this arc increases until the
bottom path becomes active. This happens in two steps: first only the path up to node $v$ becomes active for $\phi =
16$, and finally, the complete path is active from $\phi = 20$ onwards.

\section{Conclusion and Open Problems} \label{sec:conclusion} In this paper, we extended the base model that was
introduced by Koch and Skutella, to networks which capacities and speed limits that changes over time. We showed that
all central results, namely the existence of dynamic equilibria and their underlying structures in form of thin flow
with resetting, can be transfered to this new model. With these new insights it is possible to model more general
traffic scenarios in which the network properties are time-dependent. In particular, the flooding evacuation scenario,
which was mentioned in the introduction, could not be modeled (not even approximately) in the base model.

There are still a lot of open question concerning time-varying networks. For example, it would be interesting to
consider other flows over time in this setting, such as earliest arrival flows or instantaneous dynamic equilibria (see
\cite{graf2020ide}) and show their existence. Can the proof for the bound of the price of anarchy
\cite{correa2019price} be transfered to this model, or is it possible to construct an example where the price of anarchy
is unbounded? 

\bibliographystyle{splncs04} \bibliography{literature}

\begin{thebibliography}{10}
\providecommand{\url}[1]{\texttt{#1}}
\providecommand{\urlprefix}{URL }
\providecommand{\doi}[1]{https://doi.org/#1}

\bibitem{bhaskar2015stackelberg}
Bhaskar, U., Fleischer, L., Anshelevich, E.: A stackelberg strategy for routing
  flow over time. Games and Economic Behavior  \textbf{92},  232--247 (2015)

\bibitem{cao2017network}
Cao, Z., Chen, B., Chen, X., Wang, C.: A network game of dynamic traffic. In:
  Proc. of the 2017 ACM Conf. on Economics and Comp. pp. 695--696 (2017)

\bibitem{cominetti2011existence}
Cominetti, R., Correa, J., Larr{\'e}, O.: Existence and uniqueness of
  equilibria for flows over time. In: Intern. Colloquium on Automata,
  Languages, and Programming. pp. 552--563. Springer (2011)

\bibitem{cominetti2015dynamic}
Cominetti, R., Correa, J., Larr{\'e}, O.: Dynamic equilibria in fluid queueing
  networks. Operations Research  \textbf{63}(1),  21--34 (2015)

\bibitem{cominetti2017long}
Cominetti, R., Correa, J., Olver, N.: Long term behavior of dynamic equilibria
  in fluid queuing networks. In: Integer Programming and Combinatorial
  Optimization. pp. 161--172. Springer (2017)

\bibitem{correa2019price}
Correa, J., Cristi, A., Oosterwijk, T.: On the price of anarchy for flows over
  time. In: Proc. of the 2019 ACM Conf. on Economics and Comp. pp. 559--577
  (2019)

\bibitem{fleischer1998efficient}
Fleischer, L., Tardos, {\'E}.: Efficient continuous-time dynamic network flow
  algorithms. Operations Research Letters  \textbf{23}(3-5),  71--80 (1998)

\bibitem{ford1958constructing}
Ford, L.R., Fulkerson, D.R.: Constructing maximal dynamic flows from static
  flows. Operations research  \textbf{6},  419--433 (1958)

\bibitem{ford1962flows}
Ford, L.R., Fulkerson, D.R.: Flows in Networks. Princeton University Press
  (1962)

\bibitem{gale1959transient}
Gale, D.: Transient flows in networks. The Michigan Mathematical Journal
  \textbf{6}(1),  59--63 (1959)

\bibitem{graf2020ide}
Graf, L., Harks, T., Sering, L.: Dynamic flows with adaptive route choice.
  Mathematical Programming  (2020)

\bibitem{harks2018competitive}
Harks, T., Peis, B., Schmand, D., Tauer, B., Vargas~Koch, L.: Competitive
  packet routing with priority lists. ACM Transactions on Econo. and Comp.
  \textbf{6}(1), ~4 (2018)

\bibitem{kakutani1941generalization}
Kakutani, S.: A generalization of brouwer's fixed point theorem. Duke
  Mathematical Journal  \textbf{8},  457--459 (1941)

\bibitem{koch2012phd}
Koch, R.: Routing Games over Time. Ph.D. thesis, Technische Universität Berlin
  (2012). \doi{10.14279/depositonce-3347}

\bibitem{koch2009nash}
Koch, R., Skutella, M.: Nash equilibria and the price of anarchy for flows over
  time. In: Intern. Symp. on Algorithmic Game Theory. pp. 323--334. Springer
  (2009)

\bibitem{koch2011nash}
Koch, R., Skutella, M.: Nash equilibria and the price of anarchy for flows over
  time. Theory of Computing Systems  \textbf{49}(1),  71--97 (2011)

\bibitem{macko2013braess}
Macko, M., Larson, K., Steskal, L.: Braess’s paradox for flows over time.
  Theory of Computing Systems  \textbf{53}(1),  86--106 (2013)

\bibitem{minieka1973maximal}
Minieka, E.: Maximal, lexicographic, and dynamic network flows. Operations
  Research  \textbf{21}(2),  517--527 (1973)

\bibitem{peis2018oligopolistic}
Peis, B., Tauer, B., Timmermans, V., Vargas~Koch, L.: Oligopolistic competitive
  packet routing. In: 18th Workshop on Algorithmic Approaches for
  Transportation Modelling, Optimization, and Systems (2018)

\bibitem{scarsini2018dynamic}
Scarsini, M., Schr{\"o}der, M., Tomala, T.: Dynamic atomic congestion games
  with seasonal flows. Operations Research  \textbf{66}(2),  327--339 (2018)

\bibitem{sering2018multiterminal}
Sering, L., Skutella, M.: Multi-source multi-sink {N}ash flows over time. In:
  18th Workshop on Algorithmic Approaches for Transportation Modelling,
  Optimization, and Systems. vol.~65, pp. 12:1--12:20 (2018)

\bibitem{sering2019nash}
Sering, L., Vargas~Koch, L.: Nash flows over time with spillback. In: Proc. of
  the Thirtieth Annual ACM-SIAM Symp. on Discrete Algo. pp. 935--945. SIAM
  (2019)

\bibitem{skutella2009introduction}
Skutella, M.: An introduction to network flows over time. In: Research trends
  in combinatorial optimization, pp. 451--482. Springer (2009)

\bibitem{vickrey1969congestion}
Vickrey, W.S.: Congestion theory and transport investment. The American
  Economic Review  \textbf{59}(2),  251--260 (1969)

\bibitem{wardrop1952correspondence}
Wardrop, J.G.: Some theoretical aspects of road traffic research. Proc. of the
  institution of civil engineers  \textbf{1}(5),  767--768 (1952)

\bibitem{wilkinson1971algorithm}
Wilkinson, W.L.: An algorithm for universal maximal dynamic flows in a network.
  Operations Research  \textbf{19}(7),  1602--1612 (1971)

\bibitem{yagar1971dynamic}
Yagar, S.: Dynamic traffic assignment by individual path minimization and
  queuing. Transp. Res.  \textbf{5}(3),  179--196 (1971)

\end{thebibliography}

\ifarxiv
\newpage
\section{Appendix: Technical Proofs} \label{sec:appendix}

\transittimestimedep*
\begin{proof} \label{proof:transit_times:timedep} \phantom{a}
\begin{enumerate}
\item[\ref{it:queueing_time_monotone:timedep}] Consider two points in time $\theta_1 < \theta_2$, then $\tau \coloneqq
\theta_1 - \theta_2 + \tau_e(\theta_1)$ is strictly smaller than $\tau_e(\theta_2)$ since
\[\int_{\theta_2}^{\theta_2 + \tau}\lambda_e(\xi) \diff \xi =  \int_{\theta_2}^{\theta_1 + \tau_e(\theta_1)} \lambda_e(\xi)
\diff \xi < \int_{\theta_1}^{\theta_1 + \tau_e(\theta_1)} \lambda_e(\xi) \diff \xi = 1,\]
where the strict inequality holds, since $\lambda_e$ is always strictly positive. The last equality follows by the
definition of $\tau_e(\theta_1)$. Hence, with the definition of $\tau_e(\theta_2)$ we have $\theta_1 + \tau_e(\theta_1)
< \theta_2 + \tau_e(\theta_2)$.

\item[\ref{it:tau_is_diffbar:timedep}] Since $\theta \mapsto \theta + \tau_e(\theta)$ is monotone, Lebesgue's theorem
for the differentiability of monotone functions implies that it is almost everywhere differentiable. The same is then
true for $\tau_e$. The continuity follows directly from the definition since $\lambda_e$ is always strictly positive.
 
\item[\ref{it:derivative_of_tau:timedep}]
By the definition of $\tau_e(\theta)$ we have
\[\int_0^{\theta + \tau_e(\theta)} \lambda_e(\xi) \diff \xi - \int_0^{\theta} \lambda_e(\xi) \diff \xi = 1.\]
Taking the derivatives of both sides and using Lebesgue's differentiation theorem together with the chain rule, we obtain
\[\lambda_e(\theta + \tau_e(\theta)) \cdot (1 + \tau'_e(\theta)) - \lambda_e(\theta) = 0.\]
Since $\lambda_e$ is always strictly positive, we get
\[1 + \tau'_e(\theta) = \frac{\lambda_e(\theta)}{\lambda_e(\theta + \tau_e(\theta))}.\]\vspace{-2em}
\end{enumerate}
\end{proof}

\technicalpropertiestimedep*
\begin{proof}\label{proof:technical_properties:timedep}\phantom{a}
\begin{enumerate}
\item[\ref{it:q_equiv_z:timedep}] This follows directly from the definition of the waiting time $q_e$.

\item[\ref{it:positive_queue_while_emptying:timedep}] By equation \eqref{eq:outflow:timedep} we have that $f_e^-(\xi) \leq
\capa_e(\xi)$ almost everywhere. Hence, we have by definition that $q_e(\theta)$ is the minimal value such that
\[\int_{\theta + \tau_e(\theta)}^{\theta + \tau_e(\theta) + q_e(\theta)} \capa_e(\xi) \diff \xi 
= z_e(\theta + \tau_e(\theta)).\]
Thus, we obtain for $\xi \in [0, q_e(\theta))$ that
\begin{align*}
F_e^-(\theta + \tau_e(\theta) + \xi) - F_e^-(\theta + \tau_e(\theta)) 
&= \int_{\theta + \tau_e(\theta)}^{\theta + \tau_e(\theta) + \xi} f_e^-(\xi) \diff \xi \\
&\leq \int_{\theta + \tau_e(\theta)}^{\theta + \tau_e(\theta) + \xi} \capa_e(\xi) \diff \xi\\
&< z_e(\theta + \tau_e(\theta)) \\
&= F_e^+(\theta) - F_e^-(\theta + \tau_e(\theta)).
\end{align*}
Or in short: $F_e^+(\theta) - F^-_e(\theta+\tau_e(\theta) + \xi) > 0$ for $\xi \in [0,q_e(\theta))$. Since $F_e^+$ is
non-decreasing we obtain for all $\xi \in [0,q_e(\theta))$ that
\[z_e(\theta + \tau_e(\theta) + \xi)\!=\!F_e^+(\theta+\xi)-F^-_e(\theta+\tau_e(\theta) + \xi) 
\!\geq\! F_e^+(\theta)-F^-_e(\theta+\tau_e(\theta) + \xi) \!>\! 0.\]

\item[\ref{it:in_equals_out_at_exit_time:timedep}] By \eqref{eq:outflow:timedep} and
\ref{it:positive_queue_while_emptying:timedep} we obtain for almost all ${\xi \in [\theta + \tau_e(\theta), \theta +
\tau_e(\theta) + q_e(\theta))}$ that $f_e^-(\xi) = \capa_e(\xi)$. By the definition of $q_e$ we have
\begin{align*}
F_e^-(\theta + \tau_e(\theta) + q_e(\theta)) - F_e^-(\theta + \tau_e(\theta))
&= \int_{\theta + \tau_e(\theta)}^{\theta + \tau_e(\theta) + q_e(\theta)} f_e^-(\xi) \diff \xi\\
&= \int_{\theta + \tau_e(\theta)}^{\theta + \tau_e(\theta) + q_e(\theta)} \capa_e(\xi) \diff \xi \\
&= z_e(\theta + \tau_e(\theta)) \\
&= F_e^+(\theta) - F_e^-(\theta + \tau_e(\theta)).
\end{align*}
Hence, $F_e^-(T_e(\theta)) = F_e^+(\theta)$.

\item[\ref{it:equal_exit_times:timedep}]
Since $F_e^+(\theta_1) = F_e^+(\theta_2)$ we obtain with the monotonicity of
$F_e^-$ together with \Cref{lem:transit_times:timedep}~\ref{it:queueing_time_monotone:timedep} that
\begin{align*}z_e(\xi + \tau_e(\xi)) &= F_e^+(\xi) - F_e^-(\xi + \tau_e(\xi)) \\
&\geq F_e^+(\theta_2) - F_e^-(\theta_2 + \tau_e(\theta_2)) = z_e(\theta_2 + \tau_e(\theta_2)) > 0,\end{align*}
hence, \eqref{eq:outflow:timedep} provides $f_e^-(\xi) = \capa_e(\xi)$ for almost all ${\xi \in [\theta_1 +
\tau_e(\theta_1), \theta_2 + \tau_e(\theta_2)]}$.

Thus, the definition of $q_e$ implies that $q_e(\theta_1)$ equals
\begin{align*}
&\min \Set{ q \geq 0 | \begin{array}{l}
\displaystyle \int_{\theta_1 + \tau_e(\theta_1)}^{\theta_2 + \tau_e(\theta_2)} f_e^-(\xi) \diff\xi 
+ \int_{\theta_2 + \tau_e(\theta_2)}^{\theta_1 + \tau_e(\theta_1) +q} \capa_e(\xi) \diff\xi \qquad \\[1.5em] 
\hfill \qquad \qquad
= F_e^+(\theta_1) -F_e^-(\theta_1 + \tau_e(\theta_1))
\end{array} }\\
=&\min \Set{ p \geq 0 | \int_{\theta_2 + \tau_e(\theta_2)}^{\theta_2 + \tau_e(\theta_2) +p} \!\!\!\!\capa_e(\xi) \diff\xi 
= F_e^+(\theta_2) \!- \! F_e^-(\theta_2 \!+\! \tau_e(\theta_2))} \\
&\hspace{6.5cm} + \theta_2 + \tau_e(\theta_2) 
\!-\! \theta_1 \!-\! \tau_e(\theta_1)\\
=&\;q_e(\theta_2) + \theta_2 + \tau_e(\theta_2) - \theta_1 - \tau_e(\theta_1).
\end{align*}
Here, we substitute $q$ by ${p + \theta_2 + \tau_e(\theta_2) - \theta_1 - \tau_e(\theta_1)}$ in order to obtain the
first equation. Note that the condition $p \geq 0$ is always satisfied since the right hand side $F_e^+(\theta_2)
-F_e^-(\theta_2 + \tau_e(\theta_2))$ equals $z_e(\theta_2 + \tau_e(\theta_2))$ and is therefore strictly positive by
assumption. Hence, we obtain
\[T_e(\theta_1) = \theta_1 + \tau_e(\theta_1) +q_e(\theta_1)=\theta_2 + \tau_e(\theta_2) +q_e(\theta_2) = T_e(\theta_2).\]

\item[\ref{it:T_monoton:timedep}] Considering two points in time $\theta_1 < \theta_2$, we show that $T_e(\theta_1) \leq
T_e(\theta_2)$. Since $F_e^+$ is non-decreasing, \ref{it:in_equals_out_at_exit_time:timedep} implies that
\begin{equation} \label{eq:outflow_with_T_monoton:timedep}
F_e^-(T_e(\theta_2))=F_e^+(\theta_2)\geq F_e^+(\theta_1) = F_e^-(T_e(\theta_1)). 
\end{equation}
If this holds with strict inequality, we obtain by monotonicity of $F_e^-$ that $T_e(\theta_1) < T_e(\theta_2)$. If
equation \eqref{eq:outflow_with_T_monoton:timedep} holds with equality we have two cases. If ${z_e(\theta_2 +
\tau_e(\theta_2))>0}$, \ref{it:equal_exit_times:timedep} states that $T_e(\theta_1)=T_e(\theta_2)$. If~$z_e(\theta_2 +
\tau_e(\theta_2))=0$, \ref{it:positive_queue_while_emptying:timedep} applied to $\theta_1$ implies that $\xi \coloneqq
\theta_2 + \tau_e(\theta_2)-\theta_1 - \tau_e(\theta_1) \not\in [0, q_e(\theta_1))$. Since $\xi \geq 0$ by
\Cref{lem:transit_times:timedep}~\ref{it:queueing_time_monotone:timedep} we have $\xi \geq q_e(\theta_1)$, and thus,
\[T_e(\theta_2) \stackrel{\text{\ref{it:q_equiv_z:timedep}}}{=} \theta_2 + \tau_e(\theta_2) \geq \theta_1 + \tau_e(\theta_1)
 + q_e(\theta_1) = T_e(\theta_1).\]

\item[\ref{it:q_is_diffbar:timedep}] The continuity of $q_e$ follows since $\capa_e$ is always strictly positive and
$z_e$ is continuous, as it is the difference of two continuous functions. Finally, $T_e$ is continuous since it is the
sum of three continuous functions.

By \ref{it:T_monoton:timedep} the function $T_e$ is non-decreasing for all $e \in E$, and hence, Lebesgue's theorem for
the differentiability of monotone functions states that $T_e$ is almost everywhere differentiable. Since ${\theta
\mapsto \theta + \tau_e(\theta)}$ is monotone this also holds for $\tau_e$ since it is the difference of two almost
everywhere differentiable functions. As a sum of almost everywhere differentiable functions, $q_e(\theta) = T_e(\theta)
- \tau_e(\theta) - \theta$ has this property as well.

\item[\ref{it:derivative_of_T:timedep}] The definition of $q_e(\theta)$ states that
\[\int_0^{T_e(\theta)} \capa_e(\xi) \diff \xi - \int_0^{\theta + \tau_e(\theta)} \capa_e(\xi) \diff \xi 
= z_e(\theta + \tau_e(\theta)) = F_e^+(\theta) - F_e^-(\theta + \tau_e(\theta)).\]

Taking the derivative on both sides we obtain by using the chain rule that

\[\capa_e(T_e(\theta)) \cdot T'_e(\theta) - \capa_e(\theta + \tau_e(\theta)) \cdot (1 + \tau'_e(\theta)) 
= f_e^+(\theta) - f_e^-(\theta + \tau_e(\theta)) \cdot (1 + \tau'_e(\theta)).\]

If $q_e(\theta) > 0$ we have by equation \eqref{eq:outflow:timedep} that $f_e^-(\theta + \tau_e(\theta)) = \capa_e(\theta +
\tau_e(\theta))$, and therefore, dividing by $\capa_e(T_e(\theta))$ (which is strictly positive by assumption) yields
\[T'_e(\theta) = \frac{f_e^+(\theta)}{\capa_e(T_e(\theta))}.\]

For $q_e(\theta) = 0$ we have $f_e^-(\theta + \tau_e(\theta)) = \min \Set{\frac{f_e^+(\theta)}{\gamma_e(\theta)},
\capa_e(\theta + \tau_e(\theta))}$ and $T_e(\theta) = \theta + \tau_e(\theta)$. Hence, dividing by $\capa_e(\theta +
\tau_e(\theta)) = \capa_e(T_e(\theta))$ and using \Cref{lem:transit_times:timedep}.\ref{it:derivative_of_tau:timedep}
provides
\begin{align*}T'_e(\theta) &= \gamma_e(\theta) + \frac{f_e^+(\theta)}{\capa_e(T_e(\theta))} - 
\min \Set{\frac{f_e^+(\theta)}{\gamma_e(\theta)}, \capa_e(T_e(\theta))}\cdot \frac{\gamma_e(\theta)}{\capa_e(T_e(\theta))}\\
&= \max \Set{\gamma_e(\theta), \frac{f_e^+(\theta)}{\capa_e(T_e(\theta))}},
\end{align*}
which finishes the proof.
\end{enumerate}
\end{proof}

\existenceofthinflowstimedep*
\begin{proof}\label{proof:existence_of_thin_flows:base}
Let $\X$ be the compact, convex and non-empty set of all static $\source$-$\sink$-flows of value $1$ and let
$\Gamma \colon \X \to 2^\X$ be defined by
\[x' \mapsto \Set{y' \in \X | y'_e = 0 \text{ for all } e = uv \in E' \text{ with } \l'_v < \rho_e(\l'_u, x'_e)},\]

where $(\l'_v)_{v \in V}$ are the node labels associated with $x'$ uniquely defined by
\begin{equation}\label{eq:l'_labels:base}
 \l'_v =  \begin{cases}\quad\frac{1}{r} & \text{ if } v = \source, \\ 
  \min\limits_{e = uv\in E'} \rho_e(\l'_u, x'_e) &\text{ if } v
 \in V\backslash \set{\source}.
\end{cases}
\end{equation}

The existence of a fixed point of $\Gamma$ is provided by Kakutani's fixed point theorem \cite{kakutani1941generalization}
\begin{theorem}[Kakutani's Fixed Point Theorem] \label{thm:kakutani}
Let $\X$ be a compact, convex and non-empty subset of $\R^n$ and $\Gamma\colon \X \to 2^\X$, such that for every
$x \in \X$ the image $\Gamma(x)$ is non-empty and convex. Suppose the set $\Set{(x, y) | y \in \Gamma(x)}$ is
closed. Then there exists a fixed point $x^* \in \X$ of $\Gamma$, i.e.,~$x^* \in \Gamma(x^*)$.
\end{theorem}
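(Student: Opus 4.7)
The plan is to deduce Kakutani's theorem from Brouwer's fixed point theorem by approximating the correspondence $\Gamma$ by single-valued continuous functions and passing to the limit. Since $\X$ is a compact, convex, non-empty subset of $\R^n$, it is homeomorphic to a closed ball, so Brouwer's theorem applies to any continuous self-map of $\X$. For each $k \in \N$, I would fix a finite $\frac{1}{k}$-net $V_k \subseteq \X$ and a simplicial triangulation $\mathcal{T}_k$ of the convex hull of $V_k$ whose vertices all lie in $V_k$ and whose mesh tends to $0$ as $k \to \infty$; combining with the continuous nearest-point retraction of $\X$ onto $\mathrm{conv}(V_k)$ (continuous by convexity of $\X$), this yields a finely triangulated approximation of $\X$.

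Next, for every vertex $v \in V_k$ I would pick an arbitrary $y_v \in \Gamma(v)$, which is possible by the non-emptiness hypothesis, and extend this choice affinely on each simplex of $\mathcal{T}_k$. Convexity of $\X$ ensures that the resulting continuous map $f_k \colon \X \to \X$ takes values in $\X$. By Brouwer's theorem, $f_k$ has a fixed point $x_k \in \X$. By compactness of $\X$, after passing to a subsequence I may assume $x_k \to x^*$ for some $x^* \in \X$.

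For each $k$, write $x_k$ as a convex combination $x_k = \sum_{i=0}^{n} \lambda_i^{(k)} w_i^{(k)}$ of the vertices of the simplex of $\mathcal{T}_k$ containing it, so that $f_k(x_k) = \sum_{i=0}^{n} \lambda_i^{(k)} z_i^{(k)}$ with $z_i^{(k)} \in \Gamma(w_i^{(k)})$. Since the mesh shrinks to $0$, each $w_i^{(k)} \to x^*$, and a further subsequence gives $\lambda_i^{(k)} \to \lambda_i^*$ and $z_i^{(k)} \to z_i^*$ for every $i$. The closed graph hypothesis on $\Gamma$ then forces $z_i^* \in \Gamma(x^*)$ for each $i$. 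Convexity of $\Gamma(x^*)$ finally yields
\[x^* \;=\; \lim_k x_k \;=\; \lim_k f_k(x_k) \;=\; \sum_{i=0}^{n} \lambda_i^*\, z_i^* \;\in\; \Gamma(x^*),\]
completing the proof.

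The main obstacle will be the careful construction of the simplicial approximations $f_k$ for a general compact convex $\X \subseteq \R^n$: one has to ensure that the piecewise-affine interpolant remains within $\X$ and that arbitrary selections at the vertices produce functions whose fixed-point values cluster near $\Gamma$ as the mesh shrinks. The two convexity hypotheses are what drive the argument through—convexity of $\X$ keeps each $f_k$ inside $\X$ so Brouwer applies, while convexity of each image $\Gamma(x^*)$ is precisely what allows the limit convex combination of points in $\Gamma(x^*)$ to remain in $\Gamma(x^*)$.
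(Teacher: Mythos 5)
The paper does not actually prove this statement: Kakutani's fixed point theorem is cited from Kakutani's 1941 paper \cite{kakutani1941generalization} as a classical black-box result and is used only to establish the existence of thin flows with resetting in \Cref{thm:existence_of_thin_flows:timedep}, so there is no proof in the paper to compare against. Your sketch is the standard reduction of Kakutani's theorem to Brouwer's fixed point theorem via piecewise-affine simplicial approximation, which is essentially Kakutani's own original argument, and the overall strategy is sound. One step needs tightening, though: writing $f_k = g_k \circ r_k$ with $r_k$ the nearest-point retraction of $\X$ onto $\mathrm{conv}(V_k)$ and $g_k$ the affine interpolant on $\mathcal{T}_k$, the Brouwer fixed point $x_k$ satisfies $x_k = g_k(r_k(x_k))$ but need not itself lie in $\mathrm{conv}(V_k)$, so you cannot directly ``write $x_k$ as a convex combination of the vertices of the simplex of $\mathcal{T}_k$ containing it.'' Instead decompose $r_k(x_k) = \sum_i \lambda_i^{(k)} w_i^{(k)}$; since $V_k$ is a $\frac{1}{k}$-net of $\X$ one has $\| x_k - r_k(x_k) \| \le \frac{1}{k}$, so $r_k(x_k) \to x^*$ together with $x_k$, the mesh bound still forces each $w_i^{(k)} \to x^*$, and the closed-graph and convexity hypotheses then give $x^* = \lim_k f_k(x_k) = \sum_i \lambda_i^* z_i^* \in \Gamma(x^*)$ exactly as in your final display. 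With that repair the argument is correct.
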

All conditions are satisfied:
\begin{itemize}
\item The set $\Gamma(x')$ is non-empty, because there has to be at least one path $P$ from $\source$ to $\sink$ with
$\l'_v = \rho_e(\l'_u, x'_e)$ for each arc $e$ on $P$. If we set $y_e = 1$ for all arcs $e$ on $P$ and set every other
value to~$0$ we obtain an element in~$\Gamma(x')$.

\item To see that $\Gamma(x')$ is convex, note that the arcs that can be used for sending flow, i.e., the ones
satisfying $\l'_v = \rho_e(\l'_u, x'_e)$, are fixed within the set $\Gamma(x')$. Furthermore, every convex combination
$y$ of two elements $y^1, y^2 \in \Gamma(x')$ only uses arcs that are also used by $y^1$ or $y^2$.

\item In order to show that the function graph $\Set{(x', y') | y' \in \Gamma(x)}$ is closed let $(x^n, y^n)_{n\in \N}$
be a sequence within this set, i.e.,~$y^n \in \Gamma(x^n)$. Since both sequences, $(x^n)_{n\in \N}$ and $(y^n)_{n\in
\N}$, are contained in the compact set $\X$ they both have a limit $x^*$ and $y^*$ within~$\X$. Let $(\l^n)_{n \in \N}$ be
the sequence of associated node labels of $(x^n)$ and $\l^*$ the node label of~$x^*$. Note that the mapping $x' \mapsto
\l'$ is continuous, and therefore, it holds that~$\l^* = \lim_{n \to \infty} \l^n$. We prove that~$y^* \in \Gamma(x^*)$.
Suppose there is an arc $e = uv \in E'$ with $y^*_e > 0$ and~$\l^*_v < \rho_e(\l^*_u, x^*_e)$. But since $\rho_e$ is
continuous there has to be an $n_0 \in \N$ such that $y^n_e > 0$ and $\l^n_v < \rho_e(\l^n_u, x^n_e)$ for all~$n \geq
n_0$, which is a contradiction. Hence, $\Set{(x', y') | y' \in \Gamma(x)}$ is closed.
\end{itemize}

Since all conditions for Kakutani's fixed point theorem are satisfied, there has to be a fixed point~$x^*$ of~$\Gamma$.
Let $\l^*$ be the corresponding node labeling. We show that the pair $(x^*,\l^*)$ satisfies the thin flow conditions.
\Cref{eq:l'_s:timedep,eq:l'_v_min:timedep} follow immediately by \eqref{eq:l'_labels:base}. For every arc $e = uv \in
E'$ with $x^*_e > 0$ it holds that $\l^*_v = \rho_e(\l^*_u, x^*_e)$ since $x^* \in \Gamma(x^*)$, which shows
\Cref{eq:l'_v_tight:timedep}. Thus, $(x^*,\l^*)$ forms a thin flow with resetting, which completes the proof.
\end{proof}

\extensionisflowtimedep*
\begin{proof} \label{proof:extension_is_flow:timedep}
Flow conservation on nodes \eqref{eq:flow_conservation:timedep} holds since for all ${\theta \in [\l_v(\phi),
\l_v(\phi + \alpha))}$ we have
\[\sum_{e\in \delta^+(v)} \!\!\!f_e^+(\theta) - \!\!\!\sum_{e \in \delta^-(v)}\!\!\! f_e^-(\theta) 
= \!\!\!\sum_{e\in \delta^+(v)} \frac{x'_e}{\l'_v} -\!\!\! \sum_{e \in \delta^-(v)} \frac{x'_e}{\l'_v} 
= \begin{cases}
0 & \hspace{-1cm}\text{if } v \in V \setminus \set{\source, \sink} \\
r(\l_\source(\phi)) \stackrel{\text{\eqref{eq:alpha_inrate:timedep}}}{=} \theta  & \text{if } v = \source.
\end{cases}\]
Next, we show that the feasibility condition \eqref{eq:outflow:timedep} is satisfied. For this we first consider arcs
$e$ with $x'_e > 0$, which implies $e \in E'_\phi$. By \eqref{eq:l'_v_tight:timedep} we have that $\l'_v \geq
\gamma_e(\l_u(\phi)) \cdot \l'_u$. Since $\gamma$ is constant during the thin flow phase, so is $\tau'$, and therefore,
we have for all $\xi \in [0, \alpha)$ that
\begin{align*}
\l_v(\phi + \xi) &= \l_v(\phi) + \xi \cdot \l'_v\\
&\geq \l_v(\phi) + \xi \cdot \gamma_e(\l_u(\phi)) \cdot \l'_u\\
&\geq \l_u(\phi) + \tau_e(\l_u(\phi)) + \xi \cdot (1 + \tau'_e(\l_u(\phi)) \cdot \l'_u \\
&= \l_u(\phi + \xi) 
+ \tau_e(\l_u(\phi + \xi)).
\end{align*}
In other words, $e$ stays active during the thin flow phase.

We consider the outflow rate at time $\theta + \tau_e(\theta)$ for $\theta \in [\l_u(\phi), \l_u(\phi + \alpha))$. In
the case of $\theta + \tau_e(\theta) < \l_v(\phi)$ the feasibility condition follows from prior phases. Otherwise,
$\theta + \tau_e(\theta) \in [\l_v(\phi), \l_v(\phi + \alpha))$, and therefore,
\begin{align*}f_e^-(\theta + \tau_e(\theta)) &= \frac{x'_e}{\l'_v} \stackrel{\text{\eqref{eq:l'_v_tight:timedep}}}{=}
\frac{x'_e}{\rho_e(\l'_u, x'_e)} \\
&= \begin{cases}
\min\Set{\frac{x'_e}{\gamma_e(\l_u(\phi)) \cdot \l'_u}, \capa_e(\l_v(\phi))}& \text{ if } e \in E'_\phi \setminus E^*_\phi,\\
\capa_e(\l_v(\phi))& \text{ else,}
\end{cases}\\
&= \begin{cases}
\min\Set{\frac{f_e^+(\theta)}{\gamma_e(\theta)}, \capa_e(\theta + \tau_e(\theta))}& \text{ if } q_e(\theta) = 0,\\
\capa_e(\theta + \tau_e(\theta))& \text{ else.}
\end{cases}\end{align*}
In the case that $x'_e = 0$ we either have $\l'_v = 0$, but then there is nothing to show since the interval
$[\l_v(\phi), \l_v(\phi + \alpha))$ would be empty, or $\l'_v > 0$, which means by \eqref{eq:l'_v_min:timedep} that
either $e$ is not active, or it is active but non-resetting. In both cases we have $q_e(\l_u(\theta)) = 0$ and since
$f^+_e(\l_u(\theta)) = 0$ for all $\theta \in [\l_u(\phi, \l_u(\phi + \alpha))$ the queue stays empty during this phase.
\eqref{eq:outflow:timedep} follows since ${f_e^-(\theta + \tau_e(\theta)) = \frac{x'_e}{\l'_v} = 0 = f^+_e(\theta)}$ holds
for all $\theta \in [\l_u(\phi, \l_u(\phi + \alpha))$.
Altogether, we showed that the $\alpha$-extension is indeed a feasible flow over time.

\smallskip It remains to show that \Cref{eq:l_v_def:timedep} holds, which implies that the extended $\l$-functions denote the
earliest arrival times. First of all we have
\[\int_0^{\l_\source(\phi + \xi)} \inrate(\xi) \diff \xi 
= \phi + \int_{\l_\source(\phi)}^{\l_\source(\phi + \xi)} \inrate(\xi) \diff \xi 
= \phi + \inrate(\l_\source(\phi)) \cdot \l'_\source \cdot \xi = \phi + \xi.\]
Since $\inrate$ is always strictly positive, $\l_\source(\phi)$ is the minimal value with this property, which shows
\eqref{eq:l_v_def:timedep} for $v = \source$. For $v \neq \source$ we distinguish between three cases for every given
arc $e = uv \in E$.

\paragraph{Case 1:}~$e \in E\backslash E'_\phi$.\\
Since $\alpha$ satisfies \eqref{eq:alpha_others:timedep} it is satisfied for all $\xi \in [0, \alpha)$, and hence,
\begin{align*}\l_v(\phi + \xi) = \l_v(\phi) + \xi \cdot \l'_v &\!\stackrel{\eqref{eq:alpha_others:timedep}}{\leq}\!
 \l_u(\phi) + \xi \cdot \l'_u + \tau_e(\l_u(\phi) + \xi \cdot \l'_u)\\
&\leq \l_u(\phi + \xi) + 
 \tau_e(\l_u(\phi + \xi)) \leq \T_e(\l_u(\phi + \xi)).\end{align*}

\paragraph{Case 2:} $e \in E'_\phi \backslash E^*_\phi$ and~$\gamma_e(\l_u(\phi)) \cdot \l'_u \geq
\frac{x'_e}{\capa_e(\l_v(\phi))}$.\\ Since $e$ is active we have $\l_v(\phi) = \T_e(\l_u(\phi)) = \l_u(\phi) +
\tau_e(\l_u(\phi))$ and \eqref{eq:l'_v_min:timedep} implies $\l'_v \leq \gamma_e(\l_u(\phi)) \cdot \l'_u$. No
queue builds up as ${f_e^+(\l_u(\phi+\xi)) = \frac{x'_e}{\l'_u} \leq \capa_e(\l_v(\phi))}$, which means
$z_e(\l_u(\phi + \xi)+\tau_e(\l_u(\phi))) = 0$ for all $\xi \in (0, \alpha]$. Combining these yields
\begin{align*}
\l_v(\phi + \xi) & \stackrel{\eqref{eq:l'_v_min:timedep}}{\leq} \l_v(\phi) + \xi \cdot \gamma_e(\l_u(\phi)) \cdot \l'_u\\
 &\;\;\,=\;\; \l_u(\phi) + \tau_e(\l_u(\phi)) + \xi \cdot (1 + \tau'_e(\l_u(\phi)) \cdot \l'_u\\
 &\;\;\,=\;\; \l_u(\phi + \xi) + \tau_e(\l_u(\phi + \xi))\\
 &\;\;\,=\;\; T_e(\l_u(\phi + \xi)).
\end{align*}

\paragraph{Case 3:}  $e \in E^*_\phi$ or $\left(e \in E'_\phi \text{ and } \gamma_e(\l_u(\phi)) \cdot \l'_u <
\frac{x'_e}{\capa_e(\l_v(\phi))}\right)$.\\ Arc $e$ is active ,i.e., $\l_v(\phi) = \T_e(\l_u(\phi))$. We have
$\rho_e(\l'_u, x'_e) = \frac{x'_e}{\capa_e(\l_v(\phi))}$, and hence, \eqref{eq:l'_v_min:timedep} implies~$\l'_v \leq
\frac{x'_e}{\capa_e(\l_v(\phi))}$. \Cref{lem:technical_properties:timedep}~\ref{it:derivative_of_T:timedep} yields
\[T'_e(\l_u(\phi))= \frac{f_e^+(\l_u(\phi))}{\capa_e(\l_v(\phi))} = \frac{x'_e}{\l'_u \cdot \capa_e(\l_v(\phi))}\] 

since either $q_e(\l_u(\phi)) > 0$ (if $e \in E^*$) or, in the case of $e \notin E^*_\phi$, we have

\[\frac{f_e^+(\l_u(\phi))}{\capa_e(\l_v(\phi))} = \frac{x'_e}{\l'_u \cdot \capa_e(\l_v(\phi))} > \gamma_e(\l_u(\phi)).\]

Hence, for all $\xi \in
\:(0, \alpha]$ we obtain
\begin{align*}\l_v(\phi + \xi) \!\stackrel{\text{\eqref{eq:l'_v_min:timedep}}}{=}\! \l_v(\phi) + \xi \cdot \l'_v
&\leq \l_v(\phi) + \xi \cdot \frac{x'_e}{\capa_e}  \\
&= T_e(\l_u(\phi)) + \xi \cdot T'_e(\l_u(\phi)) \cdot \l'_v
= \T_e(\l_u(\phi + \xi)).\end{align*}
This shows that there is no arc with an exit time earlier than the earliest arrival time, and therefore, the left hand
side of \eqref{eq:l_v_def:timedep} is always smaller or equal to the right hand side.

It remains to show that the equation holds with equality. For every node $v \in V \setminus \set{\source}$ there is at
least one arc $e \in E'$ with $\l'_v = \rho_e(\l'_u, x'_e)$ in the thin flow due to \eqref{eq:l'_v_min:timedep}. No
matter if this arc belongs to Case 2 or Case 3 the corresponding equation holds with equality, which shows for all $\xi
\in (0, \alpha]$ that
\[\l_v(\phi + \xi) = \min_{e = uv \in E} \T_e(\l_u(\phi + \xi)).\]
This shows that \eqref{eq:l_v_def:timedep} is also satisfied for $v \neq \source$, which completes the proof.
\end{proof}

\begin{lemma}[Differentiation rule for a minimum]\label{lem:diff_rule_for_min:pre}
For every element $e$ of a finite set $E$ let $T_e \colon \R_{\geq 0} \rightarrow \R$ be a function that is
differentiable almost everywhere and let $\l(\theta) \coloneqq \min_{e \in E} T_e(\theta)$ for all $\theta \geq 0$. It holds
that $l$ is almost everywhere differentiable with
\begin{equation} \label{eq:diff_rule_for_min:pre}
\l'(\theta) = \min_{e \in E'_\theta} T'_e(\theta)
\end{equation}
for almost all $\theta \geq 0$ where $E'_\theta \coloneqq \set{e \in E| \l(\theta) = T_e(\theta)}$.
\end{lemma}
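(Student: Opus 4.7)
The plan is to isolate a full-measure set on which all $T_e$ are simultaneously differentiable, compute the one-sided derivatives of $\l$ explicitly on this set, and then invoke the Lebesgue density theorem to show the one-sided derivatives agree almost everywhere.

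Let $A \subseteq \R_{\geq 0}$ denote the set on which every $T_e$ (for $e \in E$) is differentiable; since $E$ is finite and each $T_e$ is almost everywhere differentiable, $A$ has full measure, and every $T_e$ is continuous on $A$. Fix $\theta \in A$. Because $T_e(\theta) > \l(\theta)$ strictly for every $e \notin E'_\theta$ and each $T_e$ is continuous at $\theta$, there exists $\delta > 0$ such that $\l(\theta + h) = \min_{e \in E'_\theta} T_e(\theta + h)$ for all $|h| < \delta$. Writing $T_e(\theta + h) = \l(\theta) + h \, T'_e(\theta) + h \, \epsilon_e(h)$ with $\epsilon_e(h) \to 0$ as $h \to 0$, and using finiteness of $E'_\theta$, a short case distinction shows
\[\l'_+(\theta) = \min_{e \in E'_\theta} T'_e(\theta), \qquad \l'_-(\theta) = \max_{e \in E'_\theta} T'_e(\theta).\]
For the right derivative, if $e_h \in E'_\theta$ achieves $\l(\theta + h)$, then $T'_{e_h}(\theta) \leq T'_e(\theta) + o(1)$ for every $e \in E'_\theta$; for the left derivative the inequality reverses because dividing by $h < 0$ flips it.

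The main obstacle is to show these one-sided derivatives coincide almost everywhere, i.e., that $T'_{e_1}(\theta) = T'_{e_2}(\theta)$ whenever $e_1, e_2 \in E'_\theta$ for almost every $\theta \in A$. For each unordered pair $\set{e_1, e_2} \subseteq E$, I consider
\[B_{e_1, e_2} \coloneqq \set{\theta \in A | T_{e_1}(\theta) = T_{e_2}(\theta) \text{ and } T'_{e_1}(\theta) \neq T'_{e_2}(\theta)}.\]
By the Lebesgue density theorem, almost every point of $B_{e_1, e_2}$ is a point of density of $B_{e_1, e_2}$; if $\theta$ were such a density point, a sequence $\theta_n \to \theta$ with $\theta_n \in B_{e_1, e_2}$ and $\theta_n \neq \theta$ would satisfy $(T_{e_1} - T_{e_2})(\theta_n) = 0 = (T_{e_1} - T_{e_2})(\theta)$, and differentiability of $T_{e_1} - T_{e_2}$ at $\theta$ would force $T'_{e_1}(\theta) = T'_{e_2}(\theta)$, contradicting $\theta \in B_{e_1, e_2}$. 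Hence each $B_{e_1, e_2}$ is negligible, and removing the finite union of these null sets from $A$ leaves a full-measure set on which $\l'_+(\theta) = \l'_-(\theta) = \min_{e \in E'_\theta} T'_e(\theta)$, establishing equation~\eqref{eq:diff_rule_for_min:pre}.
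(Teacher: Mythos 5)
Your argument is correct and, in its first half, closely parallels the paper's proof: restrict to the full-measure set $A$ where all $T_e$ are simultaneously differentiable, use continuity to reduce to $E'_\theta$ near $\theta$, and pass the limit through the finite minimum to identify the one-sided derivatives. (Your explicit identification of $\l'_-(\theta)$ as $\max_{e \in E'_\theta} T'_e(\theta)$ is a nice clarification that the paper leaves implicit.) Where you diverge is the final null-set step. The paper observes that a point where $\l'_+ \neq \l'_-$ must be a ``proper crossing'' of two $T_e$ functions — that is, a point $\theta \in A$ with $T_{e_1}(\theta) = T_{e_2}(\theta)$ but $T'_{e_1}(\theta) \neq T'_{e_2}(\theta)$ — and argues such points are isolated among the zeros of $T_{e_1} - T_{e_2}$ (since the difference is differentiable there with nonzero derivative), hence countable, hence null. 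You instead define the pairwise exceptional sets $B_{e_1,e_2}$ and kill each of them with the Lebesgue density theorem: a density point $\theta$ of $B_{e_1,e_2}$ admits a sequence $\theta_n \to \theta$ in the set, along which the difference quotient of $T_{e_1} - T_{e_2}$ is identically zero, forcing $T'_{e_1}(\theta) = T'_{e_2}(\theta)$ and contradicting membership in $B_{e_1,e_2}$. Both arguments are sound and essentially equivalent in force; the paper's is more elementary in machinery (it only needs that isolated subsets of $\R$ are countable, a fact the paper doesn't spell out), while yours invokes a heavier theorem but is written out more rigorously. One small point worth adding to yours: the Lebesgue density theorem requires $B_{e_1,e_2}$ to be measurable, which holds since the set $\{T_{e_1} = T_{e_2}\}$ is closed (on $A$, where the functions are continuous) and the derivatives $T'_{e_1}, T'_{e_2}$ are measurable as pointwise limits of difference quotients.
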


\begin{proof}
Let $\phi \geq 0$ such that all $T_e$, for all $e \in E$, are differentiable, which is almost everywhere.
Since all functions $T_e$ are continuous at $\phi$ we have for sufficiently small $\epsilon > 0$ that $\l(\phi +
\xi) = \min_{e \in E'_\phi} T_e(\phi + \xi)$ for all $\xi \in [\phi, \phi + \epsilon]$. It follows that

\begin{align*}\lim_{\xi \,\searrow\, 0} \frac{\l(\phi + \xi) - \l(\phi)}{\xi}
&= \lim_{\xi \,\searrow\, 0}  \min_{e \in E'_\phi} \frac{T_e(\phi + \xi) - \l(\phi)}{\xi}\\
&= \min_{e \in E'_\phi} \lim_{\xi \,\searrow\, 0} \frac{T_e(\phi + \xi) - T_e(\phi)}{\xi}
= \min_{e \in E'_\phi} T'_e(\phi).\end{align*}
Note that every point $\phi$ where all $T_e$ are differentiable, but for which the left derivative of $\l$ does not
coincide with the right derivative of $\l$, is a proper crossing of at least two $T_e$ functions. Therefore, these
points are isolated and form a null set. Hence, we have $\l'(\phi) = \min_{e \in E'_\phi} T'_e(\phi)$ for almost
all~$\phi \in \R_{\geq 0}$.
\end{proof}
\fi

\end{document}